\newcounter{braid}
\newcounter{strands}
\def\cross{%
  \@ifnextchar^{\message{Got sup}\cross@sup}{\cross@sub}}
\def\cross@sup^#1_#2{\render@cross{#2}{#1}}
\def\cross@sub_#1{\@ifnextchar^{\cross@@sub{#1}}{\render@cross{#1}{1}}}
\def\cross@@sub#1^#2{\render@cross{#1}{#2}}
\def\render@cross#1#2{
  \def\strand{#1}
  \def\crossing{#2}
  \pgfmathsetmacro{\cross@y}{-\value{braid}*\braid@h}
  \pgfmathtruncatemacro{\nextstrand}{#1+1}
  \foreach \thread in {1,...,\value{strands}}
  {
    \pgfmathsetmacro{\strand@x}{\thread * \braid@w}
    \ifnum\thread=\strand
    \pgfmathsetmacro{\over@x}{\strand * \braid@w + .5*(1 - \crossing) * \braid@w}
    \pgfmathsetmacro{\under@x}{\strand * \braid@w + .5*(1 + \crossing) * \braid@w}
    \draw[braid] \pgfkeysvalueof{/tikz/braid start} +(\under@x pt,\cross@y pt) to[out=-90,in=90] +(\over@x pt,\cross@y pt -\braid@h);
    \draw[braid] \pgfkeysvalueof{/tikz/braid start} +(\over@x pt,\cross@y pt) to[out=-90,in=90] +(\under@x pt,\cross@y pt -\braid@h);
    \else
    \ifnum\thread=\nextstrand
    \else
     \draw[braid] \pgfkeysvalueof{/tikz/braid start} ++(\strand@x pt,\cross@y pt) -- ++(0,-\braid@h);
    \fi
   \fi
  }
  \stepcounter{braid}
}
\tikzset{braid/.style={double=\pgfkeysvalueof{/tikz/braid colour},double distance=1pt,line width=2pt,white}}
\newcommand{\braid}[2][]{%
  \begingroup
  \pgfkeys{/tikz/strands=2}
  \tikzset{#1}
  \pgfkeysgetvalue{/tikz/braid width}{\braid@w}
  \pgfkeysgetvalue{/tikz/braid height}{\braid@h}
  \setcounter{braid}{0}
  \let\sigma=\cross
  #2
  \endgroup
}
\newtheorem{theorem}{Theorem}
\newtheorem{proposition}[theorem]{Proposition}
\newtheorem{lemma}[theorem]{Lemma}
\def\Z{\mathbb{Z}}
\def\C{\mathbb{C}}
\def\Q{\mathbb{Q}}
\def\R{\mathbb{R}}
\def\C{\mathbb{C}}
\def\Pi{\mathbb{P}^{\infty}}
\def\qed{\hfill$\square$\medskip}
\def\Zpk{\mathbb{Z}/p^{k}}
\def\Zpk1{\mathbb{Z}/p^{k-1}}
\newcommand{\rref}[1]{(\ref{#1})}
\newcommand{\beg}[2]{\begin{equation}\label{#1}#2\end{equation}}
\def\r{\rightarrow}
\def\sl2{\widetilde{SL_{2}(\Z)}}
\title[Equivariant K-theory of groups with involution]{Equivariant K-theory of compact Lie groups with involution}
\author{Po Hu, Igor Kriz and Petr Somberg}
\thanks{The authors acknowledge the support of the Eduard \v{C}ech ECE in Prague GA CR P201/12/G028,
and NSF grants DMS 1104348 and DMS 1102614}
\begin{document}

\maketitle

\begin{abstract}
For a compact simply connected simple Lie group $G$ with an involution $\alpha$, we compute 
the $G\rtimes \Z/2$-equivariant K-theory of $G$ where $G$ acts by conjugation and $\Z/2$
acts either by $\alpha$ or by $g\mapsto \alpha(g)^{-1}$. We also give a representation-theoretic
interpretation of those groups, as well as of $K_G(G)$.
\end{abstract}

\section{Introduction}\label{intro}

Brylinski and Zhang \cite{brylinski} computed, for (say) a simple compact Lie group $G$,
the $G$-equivariant K-theory ring $K_G(G)$ with $G$ acting on itself by conjugation,
as the ring $\Omega_{R(G)/\Z}$ of K\"{a}hler differentials of $R(G)$ over $\Z$ 
(see also Adem, Gomez \cite{ag} for related work).
Let $\alpha$ be an involutive automorphism of $G$. Then we can consider
actions of $G\rtimes \Z/2$ on $G$ where $G$ acts by conjugation and the
generator of $\Z/2$ acts
either by the automorphism $\alpha$ or by the map $\gamma:g\mapsto \alpha(g)^{-1}$.
The main result of the present paper is a computation of $K_{G\rtimes \Z/2}(G)$
in both cases as a module over $K_{G\rtimes \Z/2}(*)=R(G\rtimes\Z/2)$.
(Here $R(H)$ is the complex representation ring of a compact Lie group $H$.)

The involutive automorphism $\alpha$ determines a compact symmetric space $G/G^\alpha$, 
and we were originally interested in these computations as a kind of topological invariant
of symmetric pairs of compact type. (Recall, in effect, that $G/G^\alpha$ is a connected
component of $G^\gamma$ via the embedding $x\mapsto \alpha(x)x^{-1}$.)
It turns out, however, that the groups $K_{G\rtimes \Z/2}(G)$ are a rather crude
invariant of symmetric pairs, since they essentially only depend on whether $\alpha$ is
an outer or inner automorphism of $G$; if $\alpha$ is an inner automorphism,
$G\rtimes \Z/2$ becomes a central product, which behaves essentially the same as
the direct product from our point of view.

Nevertheless, having a complete calculation is still interesting, as are some of the methods
involved in it. The main ingredient of the method we present here is the construction of
Brylinski-Zhang \cite{brylinski,brylinski1} of the element $dv\in\Omega_{R(G)/\Z}=K_G(G)$
for a finite-dimensional complex representation $v$ of $G$. That construction,
unfortunately, was presented incorrectly in \cite{brylinski} (in fact, the elements
written there are $0$), and so we developed an
alternate construction of those elements using induction from the normalizer of a maximal
torus. However, Brylinski \cite{brylinski1} communicated the correct construction to us. 
The construction \cite{brylinski1} is completely geometric, and supersedes our previous
induction method (which, for that reason, we omit from this presentation). 
In fact, the construction \cite{brylinski1} turns out to be equivariant with respect
to both the $\alpha$ and $\gamma$ actions. This allows an ``obvious guess" of what
$K_{G\rtimes\Z/2}(G)$ should be. We validate that guess following the methods 
of Greenlees and May \cite{gmt}, involving Tate cohomology. (Essentially,
the main point is that under suitable finiteness hypothesis, a $\Z/2$-equivariant
map of $\Z/2$-CW complexes
which is an equivalence non-equivariantly is an equivalence equivariantly because
the Tate cohomology can be computed as an algebraic functor of the ``geometric
fixed points''.)

We realized, however, that the construction \cite{brylinski1} can be 
generalized to give a representation-theoretical interpretation of the groups
$K_G(G)$, $K_{G\rtimes \Z/2}(G)$. Such an interpretation is strongly motivated
by the work of Freed, Hopkins and Teleman \cite{fht} who showed that if $\tau$
is a regular $G$-equivariant twisting of $K$-theory on $G$, then the twisted
equivariant K-theory $K_{G,\tau}(G)$ is isomorphic to the free abelian group on
irreducible projective representations 
of level $\tau-h^\vee$ (where $h^\vee$ is the dual Coxeter number) 
of the loop group $LG$. 

This suggests that untwisted K-theory $K_G(G)$ should correspond to representations
at the critical level of the Lie algebra $L\frak{g}$. We found that this is indeed true,
but the representations one encounters are not lowest weight representations
(which occur, for example, in the geometric Langlands program). Instead, the 
fixed point space of the infinite loop space
$K_G(G)_0$ turns out to be the group completion of the space of 
{\em finite} representations of the loop group $LG$ with an appropriate
topology. Here by a finite representation
we mean a finite-dimensional representation which factors through a projection
$LG\r G^n$ given by evaluation at finitely many points (cf. \cite{ps}). (It is possible to conjecture
that every finite-dimensional representation of $LG$ is finite, although it may
depend on the type of loops we consider; in this paper, we restrict our attention
to continuous loops.) In fact, we also prove that this is true $\Z/2$-equivariantly
with respect to involutions, i.e. that the fixed point space of $K_{G\rtimes\Z/2}(G)_0$ is
the group completion of the space of representations of $LG\rtimes \Z/2$,
where $\Z/2$ acts on $LG$ via its action on $G$ in the case of $\Z/2$ acting
on $G$ by $\alpha$, and simultaneously on $G$ and on the loop parameter
by reversal of direction in the case when $\Z/2$ acts on $G$ by $\gamma$.

The present paper is organized as follows: In Section \ref{ssi}, we review the construction
of Brylinski-Zhang \cite{brylinski,brylinski1} and study its properties with respect to
the involution on $G$. In Section \ref{smi},
we compute the $R(G\rtimes \Z/2)$-modules $K^{*}_{G\rtimes \Z/2}(G)$. 
In Section \ref{sconc}, we discuss the computation in more concrete terms,
and give some examples. In Section \ref{srep}, we give an interpretation of
$K^{*}_{G}(G)$ in terms of representation of the loop group $LG$, and
in Section \ref{srep2}, we make that interpretation $\Z/2$-equivariant,
thus extending it to $K^{*}_{G\rtimes \Z/2}(G)$.

\vspace{3mm}

\section{The Brylinski-Zhang construction}
\label{ssi}

Let $G$ be a simply connected compact Lie group, $T$ a maximal torus, $N$ its
normalizer, $W$ the Weyl group. Let $R(G)$ denote, as usual, the
complex representation ring. Recall that if $u_1,\dots, u_n$ are the fundamental
weights of $G$ ($n=rank(G)$), then the weight lattice $T^*=Hom(T,S^1)$
is freely generated by $u_1,\dots, u_n$ and we have
$$R(T)=\Z[u_1,u_{1}^{-1},\dots, u_n,u_{n}^{-1}],$$
$$R(T)\supset R(G)=\Z[\overline{u_1},\dots, \overline{u_n}]$$
where $\overline{u_i}$ is the sum of elements of the $W$-orbit of $u_i$.

Let, for a map of commutative rings $S\r R$, $\Omega_{R/S}$ denote
the ring of K\"{a}hler differentials of $R$ over $S$.
Then one easily sees by the K\"{u}nneth theorem that we have an 
isomorphism:
\beg{ebz1}{\begin{array}{l}K^{*}_{T}(T)=\Omega_{R(T)/\Z}\\
=\Z[u_1,u_{1}^{-1},\dots,u_n,u_{n}^{-1}]\otimes \Lambda_\Z[du_1,\dots,du_n]
\end{array}
}

\begin{theorem}\label{tbz}
(Brylinski-Zhang \cite{brylinski})
Suppose $G$ acts on itself by conjugation. Then
there is a commutative diagram of rings
\beg{ebz2}{
\diagram
K_G(G)\dto_\cong \rrto^{res.} && K_T(T)\dto^\cong\\
\Omega_{R(G)/\Z}\dto_=\rrto && \Omega_{R(T)/\Z}\dto^=\\
\Z[\overline{u_1},\dots,\overline{u_n}]
\otimes \Lambda [d\overline{u_1},\dots,d\overline{u_n}]
\rrto^{\subset}&&\Z[u_{1}^{\pm 1},\dots,u_{n}^{\pm 1}]\otimes \Lambda[du_1,\dots,du_n].
\enddiagram
}
Moreover, the isomorphisms in \rref{ebz2} can be chosen in such a way
that the generator $dW\in K_G(G)$ for a complex (finite-dimensional) 
$G$-representation $W$ is represented in $G$-equivariant $K$-theory
by a complex of $G$-bundles
\beg{ebz2a}{\diagram
G\times \R\times W\rto^\phi & G\times\R\times W
\enddiagram
}
where $\phi$ is a homomorphism which is iso outside of $G\times \{0\}\times W$, 
and is given by
\beg{ebz3}{\begin{array}{ll}
\phi(g,t,w)=(g,t,tw) & \text{for $t<0$}\\
\phi(g,t,w)=(g,t,-tg(w)) & \text{for $t\geq 0$}.
\end{array}
}
\end{theorem}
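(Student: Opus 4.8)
The plan is to establish the theorem in three stages: first handle the torus case to pin down the isomorphism $K_T(T)\cong\Omega_{R(T)/\Z}$ concretely, then construct the Koszul-type complex \rref{ebz2a} and identify its class, and finally verify that these classes generate the whole ring and that the resulting map is the claimed isomorphism.

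First I would recall that for the torus $T$ acting trivially on itself (conjugation is trivial), $K^*_T(T)=R(T)\otimes_\Z K^*(T)$ by the Künneth theorem, and since $T\simeq (S^1)^n$ we have $K^*(T)=\Lambda_\Z[du_1,\dots,du_n]$ with $du_i$ the reduced class of the $i$-th circle factor (equivalently, the Bott class of the representation $u_i$ viewed via the identification $T\cong T^{**}$). This gives \rref{ebz1}. I want to choose the generators $du_i$ precisely so that $du_i$ is represented by the two-term complex of the form \rref{ebz2a}--\rref{ebz3} with $G$ replaced by $T$ and $W=u_i$ the one-dimensional representation; a direct check that this complex is exact off $T\times\{0\}\times\C$ and computes the Bott class of $u_i$ pins down the right-hand vertical isomorphism in \rref{ebz2}. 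One then knows that the derivation $u\mapsto du$ from $R(T)$ to $K^*_T(T)$ agrees with the universal Kähler differential, because both are $\Z$-linear, both are derivations (the Leibniz rule for $d$ on the $K$-theory side follows from the tensor product of complexes of the type \rref{ebz2a}, which represents multiplication), and both agree on the polynomial generators $u_i$.

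For the group $G$, the key geometric input is the complex \rref{ebz2a} with $\phi$ as in \rref{ebz3}: this is $G$-equivariant for the conjugation action on the base $G$ and the given action on the $W$-fibre, $\phi$ is an isomorphism away from the codimension-one subspace $G\times\{0\}\times W$ (for $t\neq 0$ it is multiplication by a nonzero scalar composed with an invertible linear map), and hence it defines a class $dW\in K^*_G(G)$. Restricting along $T\hookrightarrow G$ and $T\hookrightarrow G$ one sees this restricts to $d(\mathrm{res}\,W)$ in $K^*_T(T)$; since $\mathrm{res}$ is a ring map and $d$ is additive, and $\mathrm{res}(\overline{u_i})=\sum_{w\in Wu_i}w$ while $d$ of a sum is the sum of the $du$'s, this matches $d\overline{u_i}\mapsto \sum_{w\in Wu_i}(dw)$ on the nose. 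The Leibniz rule $d(W_1\otimes W_2)=dW_1\cdot W_2+W_1\cdot dW_2$ again follows by writing down the tensor product of two complexes of the form \rref{ebz2a} and observing it is chain-homotopic to the evident sum; this shows $W\mapsto dW$ is a derivation $R(G)\to K^*_G(G)$, hence factors through $\Omega_{R(G)/\Z}$, giving the left vertical map of \rref{ebz2} and the commutativity of the whole square.

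The remaining point — and the main obstacle — is to prove that the induced map $\Omega_{R(G)/\Z}\to K^*_G(G)$ is an \emph{isomorphism}, not merely a well-defined ring map. Here I would argue by comparison with the torus via the restriction maps, which form the horizontal arrows of \rref{ebz2}. The bottom inclusion is the standard description of $\Omega_{R(G)/\Z}=\Z[\overline{u_1},\dots,\overline{u_n}]\otimes\Lambda[d\overline{u_1},\dots,d\overline{u_n}]$ as the subring of $W$-invariants (in the appropriate sense) of $\Omega_{R(T)/\Z}$, using that $R(G)=R(T)^W$ is a polynomial ring on the $\overline{u_i}$ and $\Omega$ of a polynomial ring is free on the differentials of the generators. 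On the topological side one must know that $\mathrm{res}\colon K^*_G(G)\to K^*_T(T)^W$ is injective with image exactly the $W$-invariants; this is the genuinely hard step and is where one invokes the structure theory — it follows from the fact that $K^*_G(G)$ is computed by a spectral sequence (or by the Atiyah–Segal/Brylinski–Zhang analysis of the conjugation action together with the Weyl integration formula) whose $E_2$ or target is precisely $(K^*_T(T)\otimes_{R(T)}R(G))^W$, and the freeness of $R(T)$ over $R(G)$ kills higher terms. Granting injectivity of $\mathrm{res}$ on both rows and matching images, a diagram chase in \rref{ebz2} forces the left vertical arrow to be an isomorphism, since we have already identified it with the inclusion of invariants under the bottom identification. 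Finally, since the isomorphisms have been normalized so that $dW$ corresponds to the complex \rref{ebz2a}, the last clause of the statement holds by construction.
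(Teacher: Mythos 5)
Your proposal and the paper's proof diverge in two ways, one of which is a genuine gap.

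For the diagram \rref{ebz2} itself, the paper simply cites Brylinski--Zhang and proves only the ``moreover'' clause (the identification of $dW$ with the explicit two-term complex). You instead try to prove the isomorphism $K_G(G)\cong\Omega_{R(G)/\Z}$ from scratch via restriction to the maximal torus, but you yourself flag the key step --- injectivity of $\mathrm{res}:K^*_G(G)\to K^*_T(T)$ with image exactly the $W$-invariants --- as ``the genuinely hard step'' and leave it unproven. That is the entire content of the Brylinski--Zhang theorem, so this is not a step that can be waved through ``structure theory''; it should simply be cited, as the paper does.

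For the ``moreover'' clause, your route is genuinely different from the paper's but has a gap. The paper reduces to $T=S^1$ (via commutativity of \rref{ebz2}, injectivity of the diagonal arrows, and K\"unneth) and then directly computes that the class of the complex \rref{ebz2a} for $W=z^n$ is $H^n-1=(u+1)^n-1=nu$ in $\widetilde K^0_{S^1}(S^2)$; the factor $n$ is what forces the identification $dW\leftrightarrow d(z^n)=nz^{n-1}dz$. You instead argue by asserting a Leibniz rule $d(W_1\otimes W_2)=W_1\cdot dW_2+dW_1\cdot W_2$ and then invoking agreement on generators. But the justification you offer --- ``the tensor product of two complexes of the type \rref{ebz2a}, which represents multiplication'' --- does not work as stated: each complex \rref{ebz2a} lives over $G\times\R$, so the external tensor product of two of them lives over $G\times\R^2$ and represents a class in $K^{-2}_G(G)$, not the $K^{-1}$-class $d(W_1\otimes W_2)$. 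The Leibniz identity you need is an identity of $K^{-1}$-classes of Koszul complexes over a single copy of $\R$, and it is not the external product; establishing it requires a concrete chain homotopy (or the clutching-function manipulation that, after unwinding, is precisely the $(u+1)^n-1=nu$ calculation). In other words, the Leibniz rule is not a shortcut around the computation --- it is equivalent to it, and your argument as written does not actually prove it. The cleanest repair is to abandon the Leibniz route and do what the paper does: reduce to $S^1$ and compute $H^n-1$ directly.
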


\begin{proof}
The proof of the diagram \rref{ebz2} is given in \cite{brylinski}. Since there is a mistake 
in the formula \rref{ebz3} in \cite{brylinski}, (corrected in \cite{brylinski1}), we
 give a proof here. In view of the commutativity of diagram \rref{ebz2},
and injectivity of the diagonal arrows, it suffices to prove the statement for $T$
instead of $G$. By K\"{u}nneth's theorem, it suffices to further consider $T=S^1$.

In the case $T=S^1$, let $z$ be the tautological $1$-dimensional complex representation
of $S^1$ (considered as the unit circle in $\C$). Then the element of
$\widetilde{K}_{S^1}^{-1}(S^1)$ given by \rref{ebz3}
for $W=z^n$ is equal to the element of $\widetilde{K}^{0}_{S^1}(S^2)$ 
given by $H^n-1$ where $H$ is the tautological bundle on $S^2=\C P^1$ 
(with trivial action of $S^1$). But it is well known that
$$H=u+1$$
where $u\in \widetilde{K}^{0}_{S^1}(S^2)$ is the Bott periodicity element, and thus,
(recalling that $u^2=0$), 
$$H^n-1=(u+1)^n-1=nu.$$
Thus, choosing the Bott element as $u$ gives the required isomorphism in the right hand
column of \rref{ebz2} for $T=S^1$, and thus the statement follows.
\end{proof}

\vspace{3mm}
\begin{proposition}
\label{pl2}
Let $G$ be as above, let $\alpha$ be an involutive automorphism of $G$,
and let $W$ be a finite-dimensional complex representation such that 
\beg{ebz6}{\alpha^*(W)\cong W}
(where $\alpha^*(W)$ is the representation of $G$ on $W$ 
composed with the automorphism $\alpha$).
Then, given the choices described in Theorem \ref{tbz}, if the generator
$a$ of $\Z/2$ acts on $G$ by $\alpha$, then $dW$ is in the image of the
restriction (forgetful map)
\beg{ebz4}{K^{1}_{G\rtimes\Z/2}(G)\r K^{1}_{G}(G).
}
When the generator $a$ of $\Z/2$ acts on $G$ by $\gamma$, $dW$ is in the
image of the restriction (forgetful map)
\beg{ebz5}{K^{A}_{G\rtimes \Z/2}(G)\r K^{1}_{G}(G)
}
where $A$ is the $1$-dimensional real representation of $G\rtimes \Z/2$ given by
the sign representation of the quotient $\Z/2$.
\end{proposition}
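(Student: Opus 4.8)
The plan is to exhibit, in each case, an explicit $G\rtimes\Z/2$-equivariant structure on the complex of bundles \rref{ebz2a}--\rref{ebz3} whose underlying $G$-equivariant class is $dW$. First I would fix an isomorphism $\psi\colon\alpha^*(W)\xrightarrow{\cong}W$ as guaranteed by \rref{ebz6}; concretely, $\psi$ is a linear automorphism of $W$ satisfying $\psi(\alpha(g)w)=g\psi(w)$. This $\psi$ is the only extra data we need, and the heart of the argument is bookkeeping: writing down how $a$ should act on the total spaces $G\times\R\times W$ in \rref{ebz2a} so that (i) the two actions of $G$ and of $a$ assemble into a $G\rtimes\Z/2$-action, (ii) the map $\phi$ of \rref{ebz3} is equivariant, and (iii) forgetting down to $G$ recovers the original complex, hence $dW$.

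For the $\alpha$-action case I would set $a\cdot(g,t,w)=(\alpha(g),t,\psi(w))$ on the source copy of $G\times\R\times W$, and the same formula on the target; one checks $a^2=\mathrm{id}$ up to the standing choice of $\psi$ with $\psi^2=\mathrm{id}$ (which may be arranged since $\alpha^2=\mathrm{id}$, after possibly rescaling $\psi$ on isotypic blocks), and that this commutes appropriately with $G$ to give a genuine $G\rtimes\Z/2$-action. Equivariance of $\phi$ on $t<0$ is immediate since $\phi$ is scalar multiplication there and commutes with the linear map $\psi$; on $t\geq 0$ one computes $\phi(a\cdot(g,t,w))=\phi(\alpha(g),t,\psi(w))=(\alpha(g),t,-t\,\alpha(g)\psi(w))=(\alpha(g),t,-t\,\psi(g(w)))=a\cdot\phi(g,t,w)$, using exactly the intertwining property of $\psi$. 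This produces a class in $K^1_{G\rtimes\Z/2}(G)$ mapping to $dW$ under \rref{ebz4}; there is no twist because the $\R$-coordinate, on which the suspension lives, is fixed by $a$.

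For the $\gamma$-action case the only change is that $a$ must also reverse the loop/suspension parameter: here the relevant $\R$ is the one along which $\phi$ degenerates at $0$, and since $\gamma$ sends $g\mapsto\alpha(g)^{-1}$, the natural sign bookkeeping forces $a\cdot(g,t,w)=(\gamma(g),-t,\psi(w))$ (with the dual of $\psi$ intertwining $\gamma$, i.e. $\psi$ chosen on $W^*$). Then $\phi$ on $t<0$ maps to $\phi$ on $t>0$ and vice versa, and one verifies the two branches of \rref{ebz3} are interchanged compatibly, again using the intertwiner. Because $a$ now acts by $-1$ on the suspension coordinate, the resulting class naturally lives in $K^{A}_{G\rtimes\Z/2}(G)$ rather than $K^1$, which is exactly the statement; restricting to $G$ gives $dW\in K^1_G(G)$ via \rref{ebz5}.

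The main obstacle I anticipate is item (i), the rigidity of the choice of $\psi$: a priori $\psi$ is only determined up to a scalar on each irreducible constituent of $W$, and for $a$ to act as an involution (not merely a period-$2$ element up to scalars) one must choose these scalars coherently. This is a genuine but standard point — it amounts to the observation that an order-$2$ automorphism of a representation can be lifted to an order-$2$ linear map (the obstruction lies in $H^2(\Z/2;\C^\times)=0$ for the relevant isotypic pieces, or is killed by the fact that we only need the class in $K$-theory, which is insensitive to the scalar). Once $\psi$ is pinned down, the remaining verifications are the routine diagram-chases sketched above, and the compatibility with the isomorphisms of Theorem \ref{tbz} follows because that theorem's identification of $dW$ with the complex \rref{ebz2a} is natural in $W$, hence compatible with the $\Z/2$-structure we have added.
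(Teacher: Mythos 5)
Your treatment of the $\alpha$-case agrees with the paper's proof: one promotes $W$ to a representation of $G\rtimes\Z/2$ (equivalently, chooses $\psi$ to be an involution — the paper notes that when \rref{ebz6} holds there are exactly two such extensions, differing by tensoring with the sign representation of $\Z/2$), lets $a$ act identically on both copies of $G\times\R\times W$ by $(g,t,w)\mapsto(\alpha(g),t,\psi(w))$, and checks that $\phi$ intertwines.

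The $\gamma$-case, however, has a genuine gap. You propose to let $a$ act by $(g,t,w)\mapsto(\alpha(g)^{-1},-t,\psi(w))$ and to use \emph{the same formula on source and target} (the parenthetical about ``the dual of $\psi$ intertwining $\gamma$'' does not rescue this, since $\gamma$ is an anti-automorphism of $G$ and there is no intertwiner for it; the hypothesis is only \rref{ebz6} with $\alpha$, in both cases). With this symmetric action, $\phi$ does \emph{not} intertwine. Take $t<0$. Then $\phi(g,t,w)=(g,t,tw)$, so $a\cdot\phi(g,t,w)=(\alpha(g)^{-1},-t,t\,\psi(w))$; on the other hand $a\cdot(g,t,w)=(\alpha(g)^{-1},-t,\psi(w))$ with $-t>0$, so
$\phi(a\cdot(g,t,w))=(\alpha(g)^{-1},-t,t\,\alpha(g)^{-1}\psi(w))$.
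These disagree by the factor $\alpha(g)^{-1}$, which is not the identity for a nontrivial representation. The source of the problem is exactly the asymmetry of $\phi$ in \rref{ebz3}: the $t\geq 0$ branch carries a $g$-twist in the fiber while the $t<0$ branch does not, so a $\Z/2$-action that reverses $t$ cannot act by the same formula on both ends.

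The missing idea — and the key point of the paper's proof — is that the $\Z/2$-action on the \emph{target} of $\phi$ must be twisted by $g^{-1}$: keep the source action $(g,t,w)\mapsto(\alpha(g)^{-1},-t,\psi(w))$, but on the target set $(g,t,w)\mapsto(\alpha(g)^{-1},-t,\psi(g^{-1}w))$. One checks this is still a well-defined $G\rtimes\Z/2$-action (involutivity uses $\alpha(\alpha(g)\,\alpha(g^{-1}w))=w$, i.e.\ $\psi(\alpha(g)\,\psi(g^{-1}w))=w$, and the $G$-compatibility $aha^{-1}=\alpha(h)$ is a short computation), and now the two branches of $\phi$ are exchanged compatibly: for $t<0$, $a\cdot\phi(g,t,w)=(\alpha(g)^{-1},-t,\psi(g^{-1}tw))=(\alpha(g)^{-1},-t,t\,\alpha(g)^{-1}\psi(w))$, which matches $\phi(a\cdot(g,t,w))$ computed above. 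Without this asymmetric choice, the argument does not go through.
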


\begin{proof}
Recall that when \rref{ebz6} holds, then a choice of the isomorphism \rref{ebz6}
can be made to give a representation of $G\rtimes\Z/2$ on $W$. Moreover, there are
precisely two such choices, differing by tensoring by the complex $1$-dimensional
sign representation of $\Z/2$.

Consider first the case when the generator $a$ of $\Z/2$ acts on $G$ by $\alpha$.
Then consider the $\Z/2$-action on the Brylinski-Zhang construction 
\beg{eddiag}{\diagram
G\times \R\times W\dto \rto^\phi &
G\times \R\times W\dto \\
G\times\R\times W\rto^\phi &
G\times\R\times W 
\enddiagram
}
where the generator of $\Z/2$ acts by
$$
\diagram
(g,t,w)\dto|<\stop &(g,t,w)\dto|<\stop \\
(\alpha(g),t,\alpha(w))&
(\alpha(g),t,\alpha(w)).
\enddiagram
$$
When the generator $a$ of $\Z/2$ acts on $G$ by $\gamma$,
consider the $\Z/2$-action \rref{eddiag} where the generator of $\Z/2$ acts  by
$$
\diagram
(g,t,w)\dto|<\stop  &(g,t,w)\dto|<\stop \\
(\alpha(g)^{-1},-t,\alpha(w)) &
(\alpha(g)^{-1},-t,\alpha(g^{-1}w))
\enddiagram
$$
(Note that $\alpha(\alpha(g)\alpha(g^{-1}w))=w$, so the action of $a$ on the
right hand side is involutive. One readily sees that it also intertwines the
action of $G$ via the automorphism $\alpha$.)

To verify that the homomorphism $\phi$ commutes with the involution in 
the case of $a$ acting on $G$ via $\gamma$, since we already know the
action is involutive, it suffices to consider $t<0$. In this case, we have
$$
\diagram
(g,t,w)\rto|<\stop
\dto|<\stop &
(g,t,tw)\dto|<\stop\\
(\alpha(g)^{-1},-t,\alpha(w))\rto|<\stop &
(\alpha(g)^{-1},-t,t\alpha(g)^{-1}\alpha(w)).
\enddiagram
$$
\end{proof}

\vspace{3mm}

\section{The computation of equivariant $K$-theory}
\label{smi}

In this section, we will compute $K_{G\rtimes \Z/2}(G)$ where the generator
$a$ of $\Z/2$ acts by $\alpha$ or $\gamma$. First observe that in both cases, the
generator $a$ of $\Z/2$ acts on $K^{*}_{G}(G)\cong \Omega_{R(G)/\Z}$ by automorphisms
of rings. The action on $R(G)$ is given by a permutation representation given by the permutation
of irreducible representations by the automorphism $\alpha$. Alternately, one may think in terms
of the action of $\alpha$ on Weyl group orbits of weights. Let $u_1,\dots,u_n$ be the fundamental
weights of the simply connected group $G$ determined by the Lie algebra $\frak{g}$.
Let $\sigma$ be the involution on $\{1,\dots,n\}$ given by
$$\alpha^*\overline{u_i}=\overline{u_{\sigma(i)}}.$$
Consider now the short exact sequence
\beg{emi1}{1\r G\r G\rtimes \Z/2\r \Z/2\r 1.
}
By $\Z/2_+$, we shall mean the suspension spectrum of the $G\rtimes\Z/2$-space $\Z/2_+$ by the
action \rref{emi1}. We define $S^A$ by the cofibration sequence
$$\diagram\Z/2_+\rto^\iota & S^0\r S^A
\enddiagram
$$
where $S^0$ is the $G\rtimes \Z/2$-sphere spectrum and $\iota$ is the collapse map
(for terminology, see \cite{lms}). We have, of course,
$$K^{*}_{G\rtimes\Z/2}S^0=R(G\rtimes\Z/2)_{even},$$
$$K^{*}_{G\rtimes\Z/2}\Z/2_+=R(G)_{even}.$$
Here, the subscript $?_{even}$ means that the given $R(G\rtimes\Z/2)$-module is located in
the even dimension of the $\Z/2$-graded ring $K^*$. Furthermore, we have an exact sequence
$$
0\r  K^{0}_{G\rtimes\Z/2}S^A\r R(G\rtimes \Z/2)
\r R(G)\r K^{1}_{G\rtimes \Z/2}S^A\r 0
$$
where the middle arrow is restriction. Therefore, 
$K^{1}_{G\rtimes\Z/2}S^A$ is the free abelian group on irreducible $G$-representations which
do not extend to $G\rtimes \Z/2$. Recall that $\Z/2$ acts on the set of isomorphism
classes of irreducible representations of
$G$; $R(G\rtimes\Z/2)$ is the free abelian group on the regular orbits, and on two copies
of each fixed orbit. Therefore, $K^{0}_{G\rtimes\Z/2}S^A$ can be thought of as the free
abelian group on irreducible $G$-representations which do extend to $G\rtimes \Z/2$-representations.
Equivalently,
$$K^{0}_{G\rtimes\Z/2}S^A=\Z\{u\in T^*\; \text{dominant}\;|\:\alpha^*\overline{u}=\overline{u}\},$$
$$K^{1}_{G\rtimes\Z/2}S^A=\Z\{\text{regular $\alpha^*$-orbits
of dominant weights}\}.$$
Let $S^{(\epsilon)}$ for $\epsilon\in \Z$ denote $S^{A-1}=\Sigma^{-1}S^A$ 
resp. $S^0$ depending on whether $\epsilon$
is odd or even.

\vspace{3mm}
Let $\succ$ denote any chosen linear ordering of the set of subsets of
$\{1,\dots,n\}$.
Let $I_\sigma$ be the set of subsets 
\beg{edefisigma}{\{i_1<\dots<i_k\}\subseteq \{1,\dots,n\}}
such that
$$\{\sigma(i_1),\dots,\sigma(i_k)\}\succ\{i_1,\dots,i_k\}$$
and let $J_\sigma$ be the set of subsets \rref{edefisigma} such that
$$\{\sigma(i_1),\dots,\sigma(i_k)\}=\{i_1,\dots,i_k\}.$$
Let $orb( S)$ for a $\sigma$-invariant set $S$ denote the 
number of regular (=$\Z/2$-free) $\sigma$-orbits of $S$ when $a$ acts on $G$ by $\alpha$, and
of all $\sigma$-orbits of $S$ when $a$ acts on $G$ by $\gamma$.

\vspace{3mm}

\begin{theorem}\label{t1}
There exists an isomorphism of $R(G\rtimes\Z/2)$-modules
\beg{emi2}{\begin{array}{l}K^{*}_{G\rtimes\Z/2}(G)\cong\\[3ex]
K^{*}_{G\rtimes\Z/2}(
\displaystyle\bigvee_{\{ i_1<\dots<i_k\}\in I_\sigma}
\Sigma^k\Z/2_+
\vee
\displaystyle\bigvee_{\{ i_1<\dots<i_k\}\in J_\sigma}
\Sigma^k S^{(orb\{i_1,\dots,i_k\})})
\end{array}}
$G\rtimes \Z/2$
acts on the wedge summands on the right hand side of \rref{emi2} through the projection to $\Z/2$.
\end{theorem}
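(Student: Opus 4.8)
The plan is to realise the isomorphism \rref{emi2} by an explicit $G\rtimes\Z/2$-equivariant map of spectra, and to verify that it is an equivalence by the isotropy separation (``Tate square'') methods of Greenlees--May \cite{gmt}. Write $\mathcal{K}$ for the $G\rtimes\Z/2$-equivariant $K$-theory spectrum and $W$ for the wedge on the right of \rref{emi2}; I will build a $\mathcal{K}$-module map $\Psi\colon\mathcal{K}\wedge W\to\mathcal{K}\wedge\Sigma^\infty_+ G$ whose effect on $\pi_*^{G\rtimes\Z/2}$ realises \rref{emi2} after equivariant Poincar\'e duality on the closed manifold $G$ (each summand of $W$ being a twisted sphere, hence self-dual up to the evident regrading). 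Its components come from Theorem \ref{tbz} and Proposition \ref{pl2}. For $S=\{i_1<\dots<i_k\}\in I_\sigma$ the Brylinski--Zhang monomial $m_S=d\overline{u_{i_1}}\cdots d\overline{u_{i_k}}\in K^k_G(G)$ is, since $\Z/2_+\wedge G$ is induced up from $G$, exactly a $G\rtimes\Z/2$-class on the induced summand $\Sigma^k\Z/2_+$. For $S\in J_\sigma$ the set $S$ is $\sigma$-invariant; decomposing it into $\sigma$-orbits, a fixed point $\{i\}$ contributes, by Proposition \ref{pl2} and the explicit complex \rref{eddiag}, a lift of $d\overline{u_i}$ in $K^1_{G\rtimes\Z/2}(G)$ (for the $\alpha$-action) resp. $K^A_{G\rtimes\Z/2}(G)$ (for the $\gamma$-action), while a two-element orbit $\{i,\sigma(i)\}$ contributes $d\overline{u_i}\cdot d\overline{u_{\sigma(i)}}$, realised as the tensor product of the two Brylinski complexes on $G\times\R$ on which the generator of $\Z/2$ interchanges the two $\R$-factors (and, for the $\gamma$-action, reverses both); since $\R^2$ with either of these involutions is $1\oplus A$, this lands canonically in the $(A+1)$-twisted degree. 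Multiplying over the orbits of $S$, one reads off that the class sits on the summand $\Sigma^k S^{(orb\{i_1,\dots,i_k\})}$ — this is precisely the bookkeeping that the definitions of $orb$ and of $S^{(\epsilon)}$ are made for. Thus Theorem \ref{t1} reduces to showing that $\Psi$ is a $G\rtimes\Z/2$-equivalence; $R(G\rtimes\Z/2)$-linearity is then automatic, and there are no convergence problems since $G$ is a finite complex.

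Next, $\Psi$ is an equivalence of underlying $G$-spectra (``non-equivariantly'') by Theorem \ref{tbz}. Restricted to $G$, the summand $\Sigma^k\Z/2_+$ becomes two cells $\Sigma^k$, and $\Sigma^k S^{(\epsilon)}$ becomes a single cell $\Sigma^k$ (as $S^{A-1}=\Sigma^{-1}S^A$ is non-equivariantly $S^0$), so the cell count is $2|I_\sigma|+|J_\sigma|=2^n$, the $2^n$ subsets of $\{1,\dots,n\}$ falling into the $|J_\sigma|$ $\sigma$-fixed ones and the $|I_\sigma|$ unordered pairs $\{S,\sigma(S)\}$ with $S\neq\sigma(S)$. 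By construction these cells map under $\Psi$ to $m_S$ (for $S\in J_\sigma$) and to the pair $m_S,m_{\sigma(S)}$ (for $S\in I_\sigma$), since the two non-equivariant restrictions of an induced class differ by $\alpha^*$. As $\{m_S\}$ over all subsets is an $R(G)$-basis of $K^*_G(G)$ by Theorem \ref{tbz}, so is the resulting family, up to sign, and $\Psi$ restricted to $G$ is an equivalence.

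It remains to treat the closed subgroups of $G\rtimes\Z/2$ not contained in $G$. Let $\mathcal{F}$ be the family of closed subgroups contained in $G$, so that $\widetilde{E\mathcal{F}}\simeq S^{\infty A}$, and consider the cofibre sequence $E\mathcal{F}_+\to S^0\to\widetilde{E\mathcal{F}}$. Smashing $\Psi$ with it and using the five lemma, $\Psi$ is a $G\rtimes\Z/2$-equivalence as soon as $E\mathcal{F}_+\wedge\Psi$ is an equivalence (equivalently: $\Psi$ is an equivalence of underlying $G$-spectra, which is the previous step) and $\widetilde{E\mathcal{F}}\wedge\Psi$ is an equivalence. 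For the latter the crucial point — the one that makes the ``obvious guess'' go through — is that $\widetilde{E\mathcal{F}}\wedge\mathcal{K}_{G\rtimes\Z/2}$ is $\mathcal{K}_{G\rtimes\Z/2}$ with the Euler class of $A$ inverted; in $R(\Z/2)=\Z[t]/(t^2-1)$ this amounts to inverting $1-t$, which kills $1+t$ (since $(1-t)(1+t)=0$) and makes $2$ a unit. Hence $\widetilde{E\mathcal{F}}\wedge\mathcal{K}_{G\rtimes\Z/2}$ is, as Greenlees--May observe, an algebraic functor of the geometric fixed points: after inverting $2$ it is computed by the $K$-theory of the fixed-point spaces $G^\tau$ of involutions $\tau\not\in G$, which for the $\alpha$-action are (translates of) the subgroup $G^\alpha$ and for the $\gamma$-action are components of the ``symmetric space'' $G^\gamma$ (containing $G/G^\alpha$). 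On the side of $W$, $\widetilde{E\mathcal{F}}$ annihilates the free summands $\Sigma^k\Z/2_+$ ($S\in I_\sigma$), while on each summand $\Sigma^k S^{(orb\, S)}$ ($S\in J_\sigma$) the explicitness of \rref{eddiag} identifies the induced map of geometric fixed points with a suspension-and-twist of a Brylinski--Zhang generator for $G^\tau$. Since the integral equivalence after $E\mathcal{F}_+\wedge(-)$ is already in hand, the cofibre sequence glues it with the (now $2$-inverted) geometric-fixed-point check to yield the integral statement that $\Psi$ is a $G\rtimes\Z/2$-equivalence.

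The main obstacle is this last matching, and it reduces to a combinatorial identity. A summand $S\in J_\sigma$ with $f'$ $\sigma$-fixed points and $p'$ $\sigma$-pairs places its geometric fixed points in $\Z/2$-degree $f'+p'$ for the $\alpha$-action and $p'$ for the $\gamma$-action; summing $\binom{f}{f'}\binom{p}{p'}$ over $J_\sigma$ (where $f,p$ are the numbers of $\sigma$-fixed fundamental weights and of $\sigma$-pairs of fundamental weights) one must recover $K^*(G^\tau)[\tfrac12]$. In the $\alpha$-case this is $\sum_{f',p'}\binom{f}{f'}\binom{p}{p'}t^{f'+p'}=(1+t)^{f+p}$, matching $\operatorname{rank}(G^\alpha)=f+p$; in the $\gamma$-case it is $\sum_{f',p'}\binom{f}{f'}\binom{p}{p'}t^{p'}=2^f(1+t)^p$, matching the component structure of $G^\gamma$ and the ranks of its (symmetric-space) components. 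Carrying this out — identifying all the fixed-point spaces $G^\tau$ for involutions $\tau\not\in G$, computing their $K$-theory with $2$ inverted and with the residual equivariance, and confirming that the Brylinski-type classes of \rref{eddiag} span it on geometric fixed points — is the technical heart; everything else is either routine bookkeeping, already arranged by the definitions of $I_\sigma$, $J_\sigma$ and $orb$, or an immediate consequence of Theorem \ref{tbz} and Proposition \ref{pl2}.
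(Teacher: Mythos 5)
Your overall strategy — build an explicit $G\rtimes\Z/2$-equivariant map out of the Brylinski–Zhang classes, norms, and induction from $G$, verify it is an equivalence of underlying $G$-spectra via Theorem~\ref{tbz}, then promote this to a $G\rtimes\Z/2$-equivalence by isotropy separation using the rationality of $\Phi^{\Z/2}K_{\Z/2}$ from Greenlees–May — matches the paper closely, down to the use of the multiplicative norm (your ``tensor product of Brylinski complexes with interchanged $\R$-factors'' for a $\sigma$-pair is the paper's $N_G^{G\rtimes\Z/2}u$) and the $2^n$ cell count.

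However, there is a genuine gap at the step you call ``the technical heart.'' You propose to verify $\widetilde{E\mathcal{F}}\wedge\Psi$ directly, by identifying the fixed-point spaces $G^\tau$ for the various involutions $\tau\notin G$, computing their (residually equivariant, $2$-inverted) $K$-theory, and checking that the classes from \rref{eddiag} span. You do not carry this out, and the combinatorial rank-matching $\sum\binom{f}{f'}\binom{p}{p'}t^{f'+p'}=(1+t)^{f+p}$ (and its $\gamma$-analogue) is only a consistency check on ranks — it does not verify that the specific map is an isomorphism, nor that $K^*(G^\tau)[\tfrac12]$ actually has the structure your count presupposes. This is a substantial computation (decomposing $G^\tau$ into components, identifying them as symmetric spaces or translated subgroups, computing their $K$-theory with residual $G^\alpha$-equivariance) which would need to be carried through case by case.

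The paper avoids this computation entirely. Its key step is Lemma~\ref{lmi1}: for the relevant $\Z/2$-spectra $Z$ there is a natural isomorphism $\widehat{Z}_*\cong(\Phi^{\Z/2}Z)_*\otimes_R\widehat{R}$, proved by filtering via a $G\rtimes\Z/2$-CW decomposition coming from the alcove and a right-exactness/Noetherian argument (\rref{emi11}--\rref{emi12}). Since the underlying $G$-equivalence already forces the Borel homology/cohomology and hence Tate equivalences \rref{emi8}, \rref{emi9}, the base-change formula \emph{deduces} the geometric fixed point equivalence rather than computing either side. In other words, the paper's insight is that you never need to know what $\Phi^{\Z/2}Y^G$ actually \emph{is} — only how it relates algebraically to $\widehat{Y^G}$. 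Your proposal replaces this with an unexecuted direct attack, so as written it does not constitute a proof; to complete it you would either need to carry out the fixed-point-space computations in full, or instead supply the base-change lemma and the filtration/finiteness argument that make the indirect route work.
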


\begin{proof}
We first construct a $G\rtimes\Z/2$-equivariant stable map $u_S$ of each wedge summand
of \rref{emi2} into the $E_\infty$-algebra
\beg{emi3}{F(\Lambda_+,K_{G\rtimes \Z/2})
}
where $\Lambda$ is $G$ on which $g\in G\subset G\rtimes \Z/2$ acts by conjugation 
and $\alpha$ acts by $\gamma$ such that the wedge of all the maps \rref{emi3} induces
an isomorphism on $G$-equivariant coefficients. Here $F(?,?)$, as usual,
denotes the (equivariant) function spectrum, see \cite{lms}.

First, recall the isomorphism \cite{brylinski} 
\beg{ecomment1}{
\begin{array}{l}
\pi_*(F(\Lambda_+,K_{G\rtimes\Z/2})^G)=K_{G}^{*}(G)\\[2ex]
\cong \Omega_{R(G)/\Z}=\Z[\overline{u_1},\dots,\overline{u_n}]
\otimes \Lambda[d\overline{u_1},\dots,d\overline{u_n}]
\end{array}
}
of Theorem \ref{tbz}, induced by \rref{ebz1}.

For regular (=$\sigma$-free) orbits, the map we need
follows from $G$-equivariant considerations: Send, $G$-equivariantly,
$$S^k\r F(\Lambda_+,K_{G\rtimes\Z/2})$$
by the generator 
$$d\overline{u_{i_1}}\wedge\dots\wedge d\overline{u_{i_k}}$$
of \rref{ecomment1} and then
use the fact that $(G\rtimes\Z/2)\rtimes?$ is the left adjoint to the forgetful functor from 
$G\rtimes\Z/2$-spectra to $G$-spectra (cf. \cite{lms}).

Next, for $\sigma$-invariant sets $1\leq i_1<\dots<i_k\leq n$ which consist of a single
orbit, we have $k\leq 2$. If $k=1$, the map follows from Proposition \ref{pl2}. If $k=2$, we have 
a $G$-equivariant map 
$$u:S^1\r F(\Lambda_+,K_{G\rtimes \Z/2})$$
given as the generator $d\overline{u_{i_1}}\wedge d\overline{u_{i_2}}$ of
$\pi_*(F(\Lambda_+,K_{G\rtimes\Z/2})^G)=K_{G}^{*}(G)$.
The $G\rtimes\Z/2$-equivariant map
$$u_{\{i_1,i_2\}}:S^{1+A}\r F(\Lambda_+,K_{G\rtimes\Z/2})$$
we seek may then be defined as
$$N_{G}^{G\rtimes\Z/2}u$$
where $N$ is the multiplicative norm (see \cite{gmc,hhr}). Finally, we may define
$$u_{S_1\amalg\dots\amalg S_\ell}:=u_{S_1}\wedge\dots\wedge u_{S_\ell},$$
using Bott periodicity to identify $S^2$ with $S^{2A}$. Thus, taking a wedge
sum of these maps, we have a map
\beg{emi4}{\begin{array}{c}
X:=F(\bigvee \Sigma^k\Z/2_+\vee\bigvee \Sigma^kS^{(orb\{i_1,\dots,i_k\})},K_{G\rtimes\Z/2})\\[3ex]
\downarrow f\\[3ex]
Y:=F(\Lambda_+,K_{G\rtimes\Z/2})
\end{array}
}
of $K_{G\rtimes\Z/2}$-modules, inducing an isomorphism of $G$-equivariant coefficients (using the
Wirthm\"{u}ller isomorphism \cite{lms} and, again, Bott periodicity). This implies that \rref{emi4} induces
an equivalence on Borel cohomology:
\beg{emi5}{\diagram
F(E\Z/2_+,f^G):F(E\Z/2_+,X^G)^{\Z/2}\rto^(.6)\sim &
F(E\Z/2_+,Y^G)^{\Z/2}.
\enddiagram
}
We need to conclude that \rref{emi4} induces an equivalence on 
$G\rtimes\Z/2$-fixed points, i.e. that we have an equivalence
\beg{emi6}{\diagram(f^G)^{\Z/2}:(X^G)^{\Z/2}
\rto^(.6)\sim &
(Y^G)^{\Z/2}.
\enddiagram
}
To this end, consider $X^G$, $Y^G$ as $\Z/2$-equivariant spectra.

\vspace{3mm}
\begin{lemma}
\label{lmi1}
Denote
$$R=R_G:=(R(G\rtimes\Z/2)/ind_{G}^{G\rtimes\Z/2}R(G))\otimes\Q,$$
$$\widehat{R}=\widehat{R_G}:=(R(G\rtimes\Z/2)/ind_{G}^{G\rtimes\Z/2}R(G))^{\wedge}_{2}\otimes\Q.$$
Then for the $\Z/2$-spectra $Z=X^G,Y^G$, the spectra $\Phi^{\Z/2}Z$, $\widehat{Z}$
(see \cite{lms, gmt})
are rational, and we have an isomorphism
\beg{emi7}{\widehat{Z}_*\cong(\Phi^{\Z/2}Z)_*\otimes_R \widehat{R}}
natural with respect to the map \rref{emi4}. (Here $ind_{G}^{G\rtimes \Z/2}:R(G)\r R(G\rtimes \Z/2)$
denotes the induction, and $(?)^{\wedge}_{2}$ denotes completion at $2$.)
\end{lemma}

\vspace{3mm}

\noindent
{\em Proof of \rref{emi6} using Lemma \ref{lmi1}:} Note that \rref{emi4} also implies an equivalence on
Borel homology:
\beg{emi8}{\diagram
(E\Z/2_+\wedge f^G):(E\Z/2_+\wedge X^G)^{\Z/2}\rto^(.6)\sim &
(E\Z/2_+\wedge Y^G)^{\Z/2}
\enddiagram
}
and Tate cohomology
\beg{emi9}{\diagram\widehat{f^G}:\widehat{X^G}\rto^\sim &\widehat{Y^G}.
\enddiagram
}
By \rref{emi7}, however, the map
$$\Phi^{\Z/2}f^G:\Phi^{\Z/2}(X^G)\r\Phi^{\Z/2}(Y^G)$$
is also an equivalence, and together with \rref{emi8}, this implies \rref{emi6}.
\qed

\vspace{3mm}

\noindent
{\em Proof of Lemma \ref{lmi1}:} The spectra $\Phi^{\Z/2}(M)$, $\widehat{M}$ are rational
for any  cell module $M$ over the $E_\infty$-ring spectrum $K_{\Z/2}$ by 
a theorem of Greenless and May \cite{gmt} which asserts this for $M=K_{\Z/2}$.
Additionally, the methods of \cite{gmt} (or a direct calculation) readily imply that
\beg{emi10}{\begin{array}{l}
\Phi^{\Z/2}(K_{G\rtimes \Z/2}^{G})_*=R,\\
(\widehat{K_{G\rtimes \Z/2}^{G}})_*=\widehat{R}.
\end{array}
}
Now $?\otimes_R \widehat{R}$ is clearly an exact functor on $R$-modules, so by using
the long exact sequence in cohomology, it suffices to filter $X^G$, $Y^G$ both into finite
sequences of cofibrations such that the quotients $Z$ satisfy \rref{emi7}.

In the case of $X^G$, the quotients are either of the form $F(\Z/2_+,K_{G\rtimes\Z/2}^{G})$
for which the statement is trivial (both the geometric and Tate theory are $0$) or
$K_{G\rtimes\Z/2}^{G}$, which is covered by \rref{emi10}, or $\Sigma^A(K_{G\rtimes\Z/2}^{G})$,
which is a cofiber of modules of the first two types.

In the case of $Y^G$, use the decomposition of $\Lambda$ into $G$-orbits with respect
to conjugation of skeleta of the fundamental
alcove, applying also the fact that $\gamma$ acts trivially on $T$. This is, in fact, a
$G\rtimes\Z/2$-CW decomposition, where the cells are of type $H\rtimes\Z/2$ where $H$
is a compact Lie subgroup of $G$ associated to a sub-diagram of the affine Dynkin diagram.

Applying the computation of geometric and Tate $\Z/2$-fixed points of $K_H$, we are done if we can prove
\beg{emi11}{\widehat{R_H}=R_H\otimes_{R_G}\widehat{R_G}.
}
To this end, put
$$R_{H}^{0}:=R(H\rtimes\Z/2)/ind_{H}^{H\rtimes\Z/2}R(H).$$
Recall from \cite{segalrep} that $R(G\rtimes\Z/2)$ is a Noetherian ring 
and $R(H\rtimes\Z/2)$ is a finite module. Therefore, $R^{0}_{H}$ is a finite $R^{0}_{G}$-module.

Now for any Noetherian ring $P$ and a finite $P$-module $M$, we have
\beg{emi12}{M^{\wedge}_{2}=M\otimes_P P^{\wedge}_{2}.}
(Consider the presentation 
$$\bigoplus_n P \r \bigoplus_m P \r M \r 0$$
and right exactness of $(?)^{\wedge}_{2}$ in this case.) 
Rationalizing, \rref{emi12} implies \rref{emi11}.
\end{proof}

\vspace{3mm}

\section{Concrete computations and examples}\label{sconc}

Let, again, $G$ be a simply connected simple compact Lie
group and the generator $a$ of $\Z/2$ 
act on the target $G$ by $\alpha$ or by $\gamma$.
To calculate $K_{G\rtimes \Z/2}(G)$ as an $R(G\rtimes \Z/2)$-module,
 in view of Theorem \ref{t1}, it suffices to calculate the action of the
automorphism $\alpha$ on the Weyl group orbits of the fundamental weights
of the group $G$.

The key observation is that if $\alpha$ is an inner automorphism, then the
action is trivial simply because an inner automorphism does not change
the isomorphism class of a representation. 

Outer automorphisms of simply connected simple Lie groups correspond
to automorphisms of the Dynkin diagram, and therefore are necessarily
trivial for all types except $A,D$ and $E_6$. Furthermore, the
permutation representation of $\Z/2$ on orbits of fundamental weights is isomorphic
to the permutation representation on the set of simple roots (using the bijection
between fundamental weights and simple roots: A fundamental weight is
a point of the weight lattice with which one simple root has minimal positive
inner product, and the other simple roots have inner product $0$).

Recall also that an automorphism $\alpha$ of a semisimple Lie algebra $\frak{g}$
is outer if and only if $rank(\frak{g}^\alpha)<rank(\frak{g})$
(cf., \cite{helgason}). 
From the point of view of symmetric pairs $(\frak{g}, \frak{g}^\alpha)$ of compact type,
we refer to the classification of such pairs (cf., \cite{helgason}, pp. 532-534).

For types $AI$ and $AII$ (corresponding to compact simply connected symmetric
spaces $SU(n)/SO(n)$, $Sp(2n)/SO(n)$), the automorphism is outer,
so the simple weights $v_1,\dots,v_{n-1}$ are transformed by
$$\alpha(\overline{v_i})=\overline{v_{n-i}}.$$
For types $AIII$ and $AIV$ (corresponding to simply connected compact 
symmetric spaces $U(p+q)/U(p)\times U(q)$, the automorphism $\alpha$ is
inner, so all the fundamental weights are fixed.

For types $DI$-$DIII$ (corresponding to compact simply connected symmetric
spaces of type $SO(p+q)/SO(p)\times SO(q)$ with $p+q$ even),
the automorphism $\alpha$ is outer if $p$ (or, equivalently, $q$) is odd,
in which case $\alpha$ interchanges the two fundamental weights
corresponding to the spin representations. When $p$ (or, equivalently, $q$)
is even, the automorphism $\alpha$ is inner and thus, again, the action
is trivial.

For $E_6$, the four different compact simply connected symmetric spaces
are $EI$, $EII$, $EIII$, $EIV$. The automorphism $\alpha$ is outer for
$EI$ and $EIV$, interchanging two pairs of fundamental weights
(and leaving two fundamental weights fixed). For $EII$, $EIII$, the
automorphism is inner and all the fundamental weights are fixed.

\vspace{3mm}
\subsection{Two explicit examples} \label{ssex}
Let us work out explicitly the cases of $G=SU(2)$, $G=SU(3)$
where the involution $\alpha$ is of type $AI$, and $a$ acts on $G$ by 
$\gamma$. In the case of $G=SU(2)$,
denote by $x$ the fundamental weight and by $z$ the representation with character $x+x^{-1}$,
as well as its chosen extension to $SU(2)\rtimes \Z/2$. Denote further by $q$ the complex
sign representation of $\Z/2$. Then 
$$K_{SU(2)\rtimes \Z/2}^{*}(S^0)=R(SU(2)\rtimes\Z/2)=\Z[z,q]/(q^2-1)_{even}$$
and
$$K_{SU(2)\rtimes\Z/2}^{*}(S^A)=Ker(res:R(SU(2)\rtimes\Z/2)\r R(SU(2)))=\Z[z]_{even},$$
generated by $q-1\in R(SU(2)\rtimes\Z/2)$. The argument of K-theory on the right hand
side of \rref{emi2} is 
$$S^0\vee S^A.$$
Thus, we have
$$K^{*}_{SU(2)\rtimes\Z/2}(SU(2))=(\Z[z,q]/(q^2-1)\oplus \Z[z])_{even}
$$
as a $\Z[z,q]/(q^2-1)$-module.

For $G=SU(3)$, we have
$$K_{SU(3)}^{*}(*)=R(SU(3))=\Z[z,t]$$
where $z,t$ are sums of orbits of the two fundamental weights, which
are formed by vertices of the two smallest equilateral triangles with center $0$
in the honeycomb lattice. We have
$$K_{SU(3)\rtimes\Z/2}^{*}(*)=R(SU(3)\rtimes\Z/2)=\Z[\sigma_1,\sigma_2,q]/(q^2-1,(q-1)\sigma_1)$$
where $\sigma_i$ are the elementary symmetric polynomials in $z,t$ and $q$ is the complex 
sign representation of $\Z/2$ (more precisely, a non-canonical choice has to be made in lifting $\sigma_2$ to
a representation of $SU(3)\rtimes \Z/2$, but that is not important for our calculation). To compute
$K^{*}_{SU(3)\rtimes\Z/2}(S^A)$, consider the exact sequence
$$
\diagram
0\dto\\
\Z[\sigma_2]\dto^{\displaystyle q-1}\\
\Z[\sigma_1,\sigma_2,q]/(q^2-1,(q-1)\sigma_1)\dto^{\displaystyle q\mapsto 1}\\
\Z[z,t]\dto^{\scriptstyle\protect\begin{array}{l}\protect z\mapsto 1\\ \protect t\mapsto -1\end{array}}\\
\Z[\sigma_1,\sigma_2]\{z\}\dto\\
0.
\enddiagram
$$
The middle arrow is the restriction 
$$K_{SU(3)\rtimes\Z/2}^{0}(S^0)\r K_{SU(3)}^{0}(S^0),
$$
so the kernel resp. cokernel is $K^{0}_{SU(3)\rtimes\Z/2}(S^A)$
resp. $K^{1}_{SU(3)\rtimes\Z/2}(S^A)$. The argument of K-theory on the 
right hand side of \rref{emi2} is
$$S^0\vee \Sigma \Z/2_+\vee S^{1+A},$$
so we have for the maximal rank pair $(\frak{su(3)}, \frak{h})$:
$$K^{0}_{SU(3)\rtimes\Z/2}(SU(3))=\Z[\sigma_1,\sigma_2,q]/(q^2-1,(q-1)\sigma_1)\oplus \Z[\sigma_1,\sigma_2],$$
$$K^{1}_{SU(3)\rtimes\Z/2}(SU(3))=\Z[z,t]\oplus \Z[\sigma_2]$$
as $R(SU(3)\rtimes\Z/2)=\Z[\sigma_1,\sigma_2,q]/(q^2-1,(q-1)\sigma_1)$-modules.

\vspace{3mm}

\section{Representation-theoretical interpretation of $K_G(G)$}
\label{srep}

Freed, Hopkins and Teleman \cite{fht} showed that for a ``regular'' twisting $\tau$,
$K^{*}_{G,\tau}(G)$ is the free abelian group on irreducible lowest weight
representations of level $\tau-h^\vee$ of the universal central extension $\widetilde{LG}$
of the loop group $LG$. Therefore, $0$ twisting (which is not regular) corresponds to the
critical level.

We found that $K^{*}_{G}(G)$ is, indeed, related to representations of $LG$, but
not lowest weight representations (in the sense that they would be quotients of
the corresponding vertex algebra - see, e.g. \cite{langlands}). 

Instead, we encounter finite representations. Denote by $e_x:LG\r G$ the evaluation
at a point $x\in S^1$. Call a finite-dimensional complex representation of $LG$ {\em finite}
if it factors through a projection of the form
\beg{esrep1}{e_{x_1}\times\dots\times e_{x_n}: LG\r G^n.}
These representations are briefly mentioned in the book \cite{ps}. It is possible to conjecture
that all finite-dimensional representations of $LG$ are finite, although this may depend on
what kind of loops we consider; in this paper, we restrict attention to {\em continuous} loops
with the compact-open topology.

Let us first define the {\em finite representation space} $Rep(\Gamma)$ of a topological group $\Gamma$. 
Since we are about to do homotopy theory, let us work in the category of compactly generated spaces.
A finite-dimensional representation of $\Gamma$ is a finite-dimensional complex vector space $V$
together with a continuous homomorphism
\beg{esrep1a}{\Gamma\r GL(V).}
Continuous homomorphisms are, in particular, continuous maps.
Thus, the set of all representations \rref{esrep1a} for a fixed $V$ forms a topological space, denoted by
$Rep(\Gamma, V)$ with respect to the compact-open topology made compactly generated
(cf.\cite{may}). Consider the topological category $C(\Gamma)$ (both objects and morphisms
are compactly generated spaces, the source $S$
and the target $T$ are fibrations and $Id$ is a cofibration) with objects
\beg{esrep2}{\coprod_V Rep(\Gamma,V)
}
and morphisms
\beg{esrep3}{\coprod_{V,W} Rep(\Gamma,V)\times GL(V,W)
}
(where, say, $S$ is the projection and $T(\phi)$ is the representation on $W$ given
by conjugating the representation on $V$ by $\phi$; $GL(V,W)=\emptyset$
when $dim(V)\neq dim(W)$). Define the {\em representation space}
$Rep(\Gamma)$ as the bar construction on the category $C(\Gamma)$.

For $\Gamma=LG$, let $Rep_0(LG,V)$ denote the subspace of $Rep(LG,V)$ (with the induced
topology made compactly generated) consisting of finite representations. Let $C_0(LG)$
be the subcategory of $C(LG)$ defined by replacing $Rep(LG,V)$ with $Rep_0(LG,V)$,
and let $Rep_0(LG)$ be the bar construction on $C_0(LG)$. Our first aim is to identify a group
completion of the
weak homotopy type of $C_0(LG)$ with $K_G(G)$. To this end, we need a few technical
tools. First of all, let $IG$ denote the group of continuous paths
$\omega:[0,1]=I\r G$ with the 
compact-open topology, and define $C_0(IG)$, $Rep_0(IG)$ analogously with the
above, replacing $LG$ with $IG$. 

\begin{lemma}\label{lrep1}
The inclusion $G\r IG$ via constant maps induces a homotopy equivalence
$$\iota:Rep(G)\r Rep_0(IG).$$
\end{lemma}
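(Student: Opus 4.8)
The plan is to exhibit the inclusion $\iota$ as induced by a map of topological categories $C(G)\to C_0(IG)$ and show that this map is a weak equivalence on nerves, from which the statement follows since the bar construction takes weak equivalences of such categories to weak equivalences. Since the object and morphism spaces of both categories are disjoint unions over (pairs of) vector spaces, and the map $\iota$ is the identity on the indexing vector spaces and on the $GL(V,W)$ factors, it suffices to prove that for each finite-dimensional complex $V$ the inclusion
$$
Rep(G,V)\longrightarrow Rep_0(IG,V)
$$
is a weak homotopy equivalence, compatibly with the $GL(V)$-conjugation action; then a standard argument (e.g. realization of a levelwise weak equivalence of simplicial spaces, applied to the two-sided bar construction) upgrades this to a homotopy equivalence $Rep(G)\to Rep_0(IG)$.

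First I would analyze $Rep_0(IG,V)$. A representation of $IG$ on $V$ that factors through $e_{x_1}\times\cdots\times e_{x_n}\colon IG\to G^n$ is the same as a continuous homomorphism $G^n\to GL(V)$, i.e. an $n$-tuple of commuting representations of $G$ on $V$ whose images pairwise commute; but since we are in $IG$ and the path space $I$ is contractible, the points $x_1,\dots,x_n\in I$ can be slid together. Concretely, the evaluation $e_0\colon IG\to G$ at the endpoint $0$ is a homotopy equivalence of topological groups with homotopy inverse given by the constant-path inclusion $c\colon G\to IG$, because $IG$ deformation retracts onto the constant paths via $\omega\mapsto(\omega_s)$, $\omega_s(t)=\omega(st)$. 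Restriction along $e_0$ gives a map $Rep(G,V)\to Rep(IG,V)$ landing in $Rep_0(IG,V)$ (it factors through $e_0$), and this is exactly $\iota$ up to the identification $c^*e_0^*=\mathrm{id}$. The key point is then that restriction along a homotopy equivalence of topological groups induces a weak homotopy equivalence on representation spaces into $GL(V)$: this is where one uses that $c$ and $e_0$ are homotopy inverse \emph{as maps} (continuity, not just as abstract group homomorphisms), so that $e_0^*$ and $c^*$ are continuous and homotopy-inverse on the mapping spaces $\mathrm{Map}(-,GL(V))$, restricted to the subspaces of homomorphisms. The homotopy $H\colon IG\times I\to IG$ above is through homomorphisms, so it induces the required homotopy on $Rep$-spaces; and the image of $e_0^*$ is precisely $Rep_0(IG,V)$ since any homomorphism factoring through $G^n$ via evaluations, after applying the deformation retraction, becomes one factoring through $e_0$.

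The step I expect to be the main obstacle is the point-set topology: verifying that the deformation retraction $H$ of $IG$ onto constant paths, together with the compact-open topologies made compactly generated, genuinely induces a \emph{continuous} homotopy on $Rep(IG,V)$ and that this homotopy is through the subspace $Rep_0$ (and is $GL(V)$-equivariant). One must check that precomposition $\omega\mapsto\omega\circ H(-,s)$ is jointly continuous in the compact-open topology — this uses that $G$ (hence $IG$) is locally compact enough after compactly-generating, and the exponential law $\mathrm{Map}(X\times I,Y)\cong\mathrm{Map}(I,\mathrm{Map}(X,Y))$ in compactly generated spaces (cf.\ \cite{may}). One also has to confirm that $Rep_0(IG,V)$, with the subspace topology made compactly generated, is exactly the image of the continuous map $e_0^*\colon Rep(G,V)\to Rep(IG,V)$ and that $e_0^*$ is a homeomorphism onto it (injectivity is clear since $e_0$ is surjective; openness/continuity of the inverse $c^*$ restricted there is the content). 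Once these topological compatibilities are in place, the bar-construction formalism does the rest, and one records the resulting homotopy equivalence as $\iota$.
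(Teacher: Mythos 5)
Your proposal is correct and hinges on exactly the same idea as the paper: evaluation $e_0$ (your $\kappa$) is a retraction with section the constant-path inclusion, and the reparameterization homotopy $h_s(f)(t)=f(st)$ deformation retracts $IG$ onto constant paths, inducing a homotopy $\iota\kappa\simeq\mathrm{Id}$ on $Rep_0(IG)$. The paper states this directly and briefly; your version spends more words unpacking the per-$V$ spaces, the $GL(V)$-equivariance, and the compactly-generated point-set details, but the mathematical content is the same.
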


\begin{proof}
Define a map
$$\kappa:Rep_0(IG)\r Rep(G)$$
by composing with $e_0$. Then we have $\kappa\iota=Id$. 
Consider now the homotopy $h_t:IG\r IG$ given on $f:I\r G$ by $h_t(f)(x)=f(tx)$.
One easily checks that $h_t$ induces a homotopy on $Rep_0(IG)$ between $\iota\kappa$
and $Id$.
\end{proof}

Next, we define a simplicial symmetric monoidal category as follows: On the simplicial
$n$-level, we take the category $Rep_0(IG\times G^n)$ by which we mean the subcategory
of $Rep(IG\times G^n)$ on representations which factor through evaluation of $IG$
on finitely many points. We let degeneracies be given by projection 
$$IG\times G^{n+1}\r IG\times G^n,$$
and faces by the maps
$$\diagram IG\rto^{ev_1} &G
\rto^\Delta & G\times G,
\enddiagram$$
$$\diagram G
\rto^\Delta & G\times G,
\enddiagram$$
$$\diagram IG\rto^{ev_0} &G
\rto^\Delta & G\times G,
\enddiagram$$
where $\Delta$ is the diagonal. It makes sense to denote this simplicial symmetric monoidal
category by $CH_{\C_2}(C_0(IG),C(G))$ in reference to a ``Hochschild homology complex'',
and its realization by 
\beg{esrephh1}{CH_{Rep(\{e\})}(Rep_0(IG), Rep(G)).}
In fact, we will also be interested in the corresponding spaces 
$$CH_{\C_2}(C(G),C(G)),$$ 
and their realizations
\beg{esrephh2}{CH_{Rep(\{e\})}(Rep(G),Rep(G))}
defined analogously replacing $IG$  by the subgroup of constant paths in $G$. 
Note also that both spaces \rref{esrephh1}, \rref{esrephh2} are $E_\infty$ spaces,
since they are classifying spaces of symmetric monoidal categories.
Lemma
\ref{lrep1} then immediately extends to the following

\begin{lemma}\label{lrep2}
Inclusion of constant loops induces a homotopy equivalence of $E_\infty$ spaces
\beg{esrephh3}{CH_{Rep(\{e\})}(Rep_0(IG), Rep(G))\r CH_{Rep(\{e\})}(Rep(G), Rep(G)).
}
\end{lemma}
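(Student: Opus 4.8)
The plan is to upgrade the argument of Lemma \ref{lrep1} simplicial-level-by-simplicial-level and then realize. The key point is that the map \rref{esrephh3} is obtained by applying the classifying-space/bar construction to a simplicial functor of symmetric monoidal categories, so it suffices to show that on each simplicial $n$-level the inclusion of constant paths
$$Rep_0(IG\times G^n)\hookrightarrow Rep(G\times G^n)$$
is a homotopy equivalence of symmetric monoidal categories (i.e.\ induces a homotopy equivalence on classifying spaces), compatibly with faces and degeneracies, and then invoke the standard fact that a level-wise homotopy equivalence of (good, e.g.\ Reedy-cofibrant — which holds here since the degeneracies are inclusions of summands given by pullback along a projection) simplicial spaces realizes to a homotopy equivalence. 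Because the equivalences are built from explicit homotopy inverses and explicit homotopies, rather than merely abstract weak equivalences, the compatibility with the simplicial structure maps is automatic, and there is no need to worry about properness subtleties beyond the good-ness just noted.

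The core step is therefore the $n$-level statement, which is essentially Lemma \ref{lrep1} with $G$ replaced by $G\times G^n$ in the ``coefficient'' slot and the path group $IG$ unchanged. Concretely: define $\kappa_n: Rep_0(IG\times G^n)\to Rep(G\times G^n)$ by restricting a representation along the inclusion $G\times G^n\hookrightarrow IG\times G^n$ that sends $(g,\vec h)$ to the constant path at $g$ together with $\vec h$; this is well defined because evaluation at finitely many points is compatible with restriction to constant paths (the evaluation of a constant path is that constant, so a finite representation of $IG\times G^n$ restricts to an honest representation of $G\times G^n$). One has $\kappa_n\circ\iota_n=\mathrm{Id}$ on the nose. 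For the other composite, reuse the contracting homotopy $h_t:IG\to IG$, $h_t(f)(x)=f(tx)$, crossed with $\mathrm{Id}_{G^n}$; since $h_t$ is a continuous family of (continuous) endomorphisms of the topological group $IG\times G^n$ fixing the constant paths, precomposition gives a homotopy on $Rep_0(IG\times G^n)$ from $\iota_n\kappa_n$ to $\mathrm{Id}$, exactly as in the proof of Lemma \ref{lrep1}. Here one must check that $h_t$ preserves finiteness: if a representation factors through evaluation at $x_1,\dots,x_m\in I$, then precomposing with $h_t$ yields a representation factoring through evaluation at $tx_1,\dots,tx_m$, still a finite set of points, so it stays in the finite subcategory for every $t$. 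Since all these maps and homotopies are symmetric-monoidal (they are induced by group homomorphisms and are strictly compatible with the monoidal structure coming from $\oplus$ of representations), they pass to classifying spaces to give a homotopy equivalence of $E_\infty$ spaces at each level.

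Finally, one assembles: the functors $\iota_n,\kappa_n$ and the homotopies $h_t\times\mathrm{Id}$ are strictly compatible with the face maps (diagonal composed with evaluation at the endpoints $0,1$ of $I$ for the $IG$-factor, and the diagonal on the constant-path side) and with the degeneracy maps (projection off a $G$-factor), because evaluation of a constant path at $0$ or at $1$ is the same constant. Hence they define maps of simplicial symmetric monoidal categories and a simplicial homotopy between the two composites. Realizing, and using that geometric realization of a good simplicial space preserves level-wise homotopy equivalences, we obtain that \rref{esrephh3} is a homotopy equivalence; it is an $E_\infty$ map since the whole construction was monoidal.

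The step I expect to be the main obstacle is the bookkeeping needed to see that $\kappa_n$ and the homotopy genuinely live in the \emph{finite} subcategories and commute \emph{on the nose} with all the simplicial structure maps — in particular that the face maps built from $ev_0,ev_1$ and the diagonal interact correctly with ``factors through finitely many evaluation points,'' and that no hidden continuity or compact-generation issue arises in forming $Rep_0(IG\times G^n)$ as a subspace with the induced (compactly generated) compact-open topology. This is routine but is where all the care is; the homotopy-theoretic realization step is then formal.
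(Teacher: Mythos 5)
Your overall plan---extend the proof of Lemma \ref{lrep1} level-by-level on the simplicial symmetric monoidal category and then realize---is precisely what the paper means by ``Lemma \ref{lrep1} then immediately extends,'' and the conclusion you reach is correct. However, one of your intermediate assertions is false, and you should not make it (fortunately it is not needed). You claim that the homotopy inverse $\iota_n$ and the homotopy $h_t\times\mathrm{Id}$ are \emph{strictly} compatible with the simplicial structure, yielding a simplicial homotopy between the two composites. This fails for the face built from $ev_1$: one has $ev_1(h_t(f)) = h_t(f)(1) = f(t)$, which is not $ev_1(f) = f(1)$, so $ev_1\circ h_t\neq ev_1$. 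Concretely, at the group level $H_t\circ d_0$ corresponds to restriction along $(f,g_1,\dots,g_{n-1})\mapsto (h_t(f), f(t), g_1,\dots,g_{n-1})$, while $d_0\circ H_t$ corresponds to restriction along $(f,g_1,\dots,g_{n-1})\mapsto(h_t(f), f(1), g_1,\dots,g_{n-1})$; these differ for $t<1$. For the same reason, $\iota_n$ (restriction along $e_0\times\mathrm{Id}$) does not commute with that face either, since $e_0$ disagrees with $ev_1$ on non-constant paths.

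The error is harmless, because what you actually use in the end is the right thing. The forward functors $\kappa_n$ (restriction along the inclusion of constant paths) \emph{do} commute strictly with all faces and degeneracies, since $ev_0$ and $ev_1$ agree on constant paths; each $\kappa_n$ is a homotopy equivalence by the argument of Lemma \ref{lrep1}, with a homotopy inverse that need not be simplicial; and a levelwise homotopy equivalence of good simplicial spaces realizes to a homotopy equivalence. Since the whole construction is symmetric monoidal, the resulting equivalence is an $E_\infty$ map. So you should simply delete the sentences asserting strict compatibility of $\iota_n$ and of $h_t$ and the resulting ``simplicial homotopy,'' and rely on the levelwise-equivalence-plus-goodness argument you already invoke. (A small slip: the level-$n$ map is not an inclusion of subspaces; it is the restriction functor $\kappa_n: Rep_0(IG\times G^n)\to Rep(G\times G^n)$, with homotopy inverse restriction along $e_0\times\mathrm{Id}$.)
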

\qed

\vspace{3mm}

Next, we prove 

\begin{lemma}\label{lrep3}
There is an equivalence of $E_\infty$ spaces
$$\Omega B CH_{Rep(\{e\})}(Rep(G), Rep(G))\sim K_G(G)_0$$
where the subscript denotes infinite loop space, and by $K_G(G)$ we denote the 
spectrum of $G$-equivariant maps $F_G(G_+,K_G)$ (where, as before, on the
source $G$ acts by conjugation). (See \cite{lms} for the standard
notation.)
\end{lemma}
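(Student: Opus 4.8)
The plan is to identify $BC H_{Rep(\{e\})}(Rep(G),Rep(G))$ with a model for the $G$-equivariant function spectrum $F_G(G_+,K_G)$ at the level of infinite loop spaces, by recognizing the Hochschild-type bar construction as computing the appropriate (derived, equivariant) endomorphisms of the representation category. First I would recall that, by the Atiyah--Segal completion theorem together with the standard identification of equivariant $K$-theory with the $K$-theory of the symmetric monoidal category of $G$-vector bundles, the space $K_G(*)_0$ (infinite loop space of $F_G(G_+/G,K_G)$, i.e. $K_G$ itself evaluated at a point) is $\Omega B Rep(G)$, where $Rep(G)$ is the representation space defined via the bar construction on $C(G)$ as in the text. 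This is the $n=0$ degeneration; the point of the $G^n$-levels in $C H_{\C_2}(C(G),C(G))$ is to build in the extra conjugation circle, i.e.\ to pass from $K_G(*)$ to $K_G(G)$ with $G$ acting on itself by conjugation.

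The key step is the following translation. The $G$-space $G$ (acting by conjugation) has, as a model in spaces, the cyclic/Hochschild bar construction: $G \simeq |[n]\mapsto G^{n+1}|$ with the usual cyclic structure, and $EG\times_G G \simeq |[n]\mapsto G^{n+1}|_{hG}$ recovers the free loop space $LBG$; equivariantly, $F_G(G_+,K_G)$ is computed by the cosimplicial/simplicial object obtained by smashing $K_G$ with this bar resolution of $G_+$ and taking $G$-fixed (function-spectrum) points levelwise. On $K$-theory of categories this dualizes to the statement that $F_G(G_+,K_G)_0$ is the realization of the simplicial $E_\infty$ space $[n]\mapsto \Omega B\,Rep_G(G^n)$, where $Rep_G(G^n)$ is the category of $G$-equivariant vector bundles on $G^n$ (with $G$ acting by simultaneous conjugation) --- equivalently, representations of $G\ltimes G^n$ --- with the face maps given exactly by the diagonal/evaluation maps written in the definition of $C H_{\C_2}(C(G),C(G))$. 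Comparing this with the definition of $C H_{Rep(\{e\})}(Rep(G),Rep(G))$, whose $n$-simplices realize $Rep(G\times G^n)$ with precisely those faces, I would construct a levelwise map of simplicial $E_\infty$ spaces and check it is a levelwise equivalence using the group-completion theorem together with the identification of each level with equivariant $K$-theory of a finite product of copies of $G$. Passing to realizations and applying $\Omega B(-)$ (which commutes with realization up to equivalence for good simplicial $E_\infty$ spaces, after group completion) then yields the stated equivalence.

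The main obstacle, and the step requiring the most care, is the commutation of group completion with geometric realization: the individual levels $Rep(G\times G^n)$ are \emph{not} group-complete, so $\Omega B$ and $|\,\cdot\,|$ cannot be interchanged naively, and one must argue that the bar construction in the simplicial direction is compatible with the group completion in the $K$-theory direction. The standard remedy is to replace each level by its $K$-theory spectrum (a connective spectrum-valued functor), for which realization is unproblematic and commutes with taking infinite loop spaces; then one only needs that on $\pi_0$ the Hochschild complex assembles correctly, which is exactly the statement $K_G(G)\cong \Omega_{R(G)/\Z}$ of Theorem~\ref{tbz} interpreted as Hochschild homology $HH_*(R(G))$ of the (smooth) ring $R(G)$ over $\Z$, together with the Hochschild--Kostant--Rosenberg identification $HH_*(R(G))\cong\Omega^*_{R(G)/\Z}$. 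A secondary technical point is verifying that the face maps of the simplicial category given by $ev_0,ev_1$ and $\Delta$ on $IG$-constant-path models really do implement the cyclic-bar faces on $G$; this is a direct but slightly tedious check, and it is also where Lemma~\ref{lrep2} is used, to replace the $IG$-model (convenient for the later loop-group comparison) by the constant-path model used here.
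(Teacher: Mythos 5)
The route you propose differs substantially from the paper's, and there is a genuine gap in the middle of it. The paper does \emph{not} compare two simplicial objects levelwise. Instead, it constructs a single explicit $E_\infty$ map directly from the realization $|CH_{\mathcal{C}_2}(C(G),C(G))|$ into (a topologized model of) the category of $G$-equivariant bundles on $G$, by exhibiting a concrete $G$-bundle on $G\times |CH_{\mathcal{C}_2}(C(G),C(G))|$ whose $0$-faces act by multiplication by $(1,g,1,\dots,1)$. After applying an infinite loop space machine and Bott-localizing, this gives a map $CH_K(K_G,K_G)\to K_G(G)$, and \emph{then} the simplicial filtration is invoked as a spectral sequence computing $\pi_*$ of the \emph{source only}, with $E_2=HH_\Z(R(G),R(G))\cong\Omega^*_{R(G)/\Z}$ (HKR), while the target is identified with $\Omega^*_{R(G)/\Z}$ independently by Theorem~\ref{tbz}. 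The isomorphism is then deduced by checking that the map is one of ring spectra (rigidified via Elmendorf--Mandell and the topologized Joyal--Street construction) and matching multiplicative generators. Your outline is missing exactly this central construction of the comparison map.

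The gap in your proposed substitute is the claimed levelwise identification. You state that the level $n$ of a simplicial model of $F_G(G_+,K_G)_0$ is $\Omega B\,Rep_G(G^n)$, ``the category of $G$-equivariant vector bundles on $G^n$ --- equivalently, representations of $G\ltimes G^n$.'' These two interpretations are not equivalent, and neither makes the levelwise comparison with $\Omega B\,Rep(G\times G^n)$ straightforward. Equivariant bundles on the non-discrete space $G^n$ have $K$-theory $K^0_G(G^n)$ (already for $n=1$ this is $\Omega^{even}_{R(G)/\Z}$, not $R(G)\otimes R(G)$), so the levels would fail to match the Hochschild levels. Moreover, even setting that aside, the passage from the simplicial (cyclic-bar) resolution of the $G$-space $G^{ad}$ to a \emph{realization} computing the function spectrum $F_G(G_+,K_G)$ is precisely the kind of colimit-vs-limit interchange (function spectra are totalizations over simplicial resolutions of the source) that needs a Poincar\'e-duality or Thom-isomorphism argument; you flag the group-completion-vs-realization issue but not this more basic one. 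You do have the correct endpoint in view (HKR plus Brylinski--Zhang identifies $HH_*(R(G))\cong\Omega^*_{R(G)/\Z}\cong K^*_G(G)$), and that part agrees with the paper, but the mechanism you propose for getting there would not go through as written. Also, Lemma~\ref{lrep2} is not used in the proof of this lemma; it enters only later, in assembling Theorem~\ref{trrep}.
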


\begin{proof}
We need a symmetric monoidal functor from the simplicial realization of
the category $CH_{C(\{e\})}(C(G), C(G))$ to vector $G$-bundles on $G$.
Since the source however has topologized objects, it is convenient to
consider an equivalent model of the category of bundles where objects
can vary continuously parametrized by a space. In more detail, we consider
the category with both objects and morphisms topologized, where 
the objects are formed by the disjoint union of CW-complexes $X\times\{\xi\}$ where $\xi$ is a
$G$-bundle on $G\times X$. Morphisms are disjoint unions of spaces $\Gamma_{X,\xi,Y,\eta}$
consisting of triples $x\in X$, $y\in Y$ and isomorphisms $f:\xi|G\times\{x\}\r \eta|G\times \{y\}$
topologized so that the projection $\Gamma_{X,\xi,Y\eta}\r X\times Y$ is locally a product
(which is done canonically by local triviality of $\xi$, $\eta$).
This is a symmetric monoidal category (which we will denote by $Bun_{G}(G)^\prime$
by Whitney sum of pullbacks of $G$-bundles over $G\times X$ and
$G\times Y$ to a $G$-bundle over $G\times X\times Y$, and moreover a groupoid whose
skeleton is the ordinary symmetric monoidal category $Bun_G(G)$ of $G$-bundles on $G$.

Then,
if we denote by $C$ the simplicial realization of the category $CH_{C(\{e\})}(C(G), C(G))$,
it suffices to construct a symmetric monoidal functor 
$C\r Bun_G(G)^\prime$, which can be constructed from a $G$-bundle on $G\times Obj(C)$ which satisfies 
the appropriate additivity and functoriality properties. To this end,
it suffices to construct a functor, symmetric monoidal over $G$, of the form
\beg{erephh4}{G\times CH_{C(\{e\})}(C(G), C(G))\r C(G),
}
where in the source, we consider the ``total'' category spanned by the level
morphisms as well as the simplicial structure. Construct \rref{erephh4}
as follows: on each level, put
$$(g,V)\mapsto V.$$
All faces and degeneracies are set to the identity, except the $0$'th face on each
level. The $0$-face from level $n$ to level $n-1$
 is sent, at $g\in G$, to multiplication  by 
$$(1,g,\underbrace{1,\dots,1)}_{\text{$n-1$ times}}.$$ 
Applying the classifying
space functor, this gives an $E_\infty$-map from $CH_{Rep(G)}(Rep(G), Rep(G))$
to the space of $G$-equivariant vector bundles on $G$. Applying an infinite loop
space machine and localization at the Bott element, we obtain a map which, up to homotopy, can be expressed as
\beg{erephh5}{CH_K(K_G,K_G)\r K_G(G)}
where $K_G$ denotes the $K$-module of $G$-fixed points of $G$-equivariant $K$-theory.
Now there is a spectral sequence (coming from the simplicial structure) converging to the
homotopy of the left hand side of \rref{erephh5} whose $E_2$-term is 
\beg{erephh6}{HH_\Z(R(G),R(G)).}
Note that the source and target coefficient rings of \rref{erephh5}, as well as \rref{erephh6} are rings,
where in fact the ring \rref{erephh6} is isomorphic to
the homotopy ring of the target of \rref{erephh5}, and the generators of \rref{erephh6} are permanent cycles, which map
to the corresponding generators of $K_{G}^{*}(G)$ by Theorem \rref{tbz}.
Thus, we are done if we can prove that \rref{erephh5} is a map of ring spectra. 

In fact, one can rigidify \rref{erephh6} to become
a map of $E_\infty$ ring spectra. The functor from $CH_{Rep(G)}(Rep(G), Rep(G))$ to
$Bun_G(G)^\prime $ is a weak symmetric bimonoidal functor on each simplicial level,
with the simplicial structure maps weakly preserving the structure. Thus, on the ``totalized'' category
where we combine the levels and consider simplicial structure maps as morphisms, we obtain
a weak symmetric bimonoidal functor into $Bun_G(G)^\prime$. Applying the Elmendorf-Mandell machine
\cite{em} and localization at the Bott element, an $E_\infty$ model of \rref{erephh5} follows.
(The Joyal-Street construction on categories with both objects and morphisms topological \cite{kl}
is also relevant.)
\end{proof}

\vspace{3mm}

Finally, using the map
\beg{elli}{LG\r IG\times G^n}
given by $f\mapsto (f\circ\pi,f(0),\dots,f(0))$ where $\pi:I\r I/0\sim 1=S^1$ is the projection,
we obtain a map
\beg{erephh7}{p:CH_{Rep(G)}(Rep_0(IG),Rep(G))\r Rep_0(LG).}

\begin{lemma}\label{lrep4}
The map $p$ is an equivalence.
\end{lemma}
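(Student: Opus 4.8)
The plan is to exhibit an explicit homotopy inverse to $p$, using the Hochschild-complex description of the source together with Lemma \ref{lrep2}. Recall that the source $CH_{Rep(\{e\})}(Rep_0(IG),Rep(G))$ is the geometric realization of a simplicial space whose $n$-th level is the classifying space of $Rep_0(IG\times G^n)$, while $Rep_0(LG)$ is (after passing to the equivalent bar-construction model of Lemma \ref{lrep1}) the classifying space of $C_0(LG)$ viewed as a constant simplicial object. The idea is that a finite representation of $LG$ is the same data as: a representation of $IG$ factoring through finitely many evaluation points on the open interval, together with the ``gluing'' identification of the value at $0$ with the value at $1$ — and that identification is exactly what the face maps in the Hochschild complex record when $n\geq 1$.

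First I would replace $Rep_0(IG)$ by $Rep(G)$ via Lemma \ref{lrep2}, so that it suffices to prove that $CH_{Rep(\{e\})}(Rep(G),Rep(G))\r Rep_0(LG)$, obtained by composing $p$ with the equivalence \rref{esrephh3}, is itself an equivalence; note the composite sends a point of simplicial level $n$, which is a representation of $G^{n+1}$, to the representation of $LG$ pulled back along $f\mapsto(f(0),f(x_1),\dots,f(x_n))$ for a configuration of $n$ points $0<x_1<\dots<x_n<1$ (equivalently, along the evaluation $LG\r G^{n+1}$ at $n+1$ points of $S^1$). Conversely, a finite representation of $LG$ factoring through evaluation at points $y_0,\dots,y_n\in S^1$ determines, after choosing a cut point away from the $y_i$, a canonical point of the $n$-th level of the Hochschild complex. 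The main step is to organize these choices: I would filter $Rep_0(LG)$ by the number of evaluation points and build, compatibly with this filtration, a section $s$ of the composite together with a simplicial homotopy from $s\circ(\text{composite})$ to the identity on $CH$. The homotopy is the standard ``extra degeneracy/cyclic shift'' argument: moving the cut point around $S^1$ past one evaluation point implements a face-then-degeneracy move, and contracting the configuration space of cut points (which is an interval once the evaluation points are fixed) gives the homotopy.

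The key technical point, and the step I expect to be the main obstacle, is checking that everything is continuous in families as the evaluation points vary and collide — i.e. that the section and the homotopy are defined on the actual topologized categories, not just on isomorphism classes. When two evaluation points of a finite representation coincide, the representation is also given by evaluation at fewer points, so the ``canonical'' choice of Hochschild simplex jumps dimension; this is precisely why the target is a geometric realization of a simplicial space rather than a single space, and the filtration by number of points is what makes the gluing continuous. Concretely, I would check that the maps $\Delta^n\times Rep_0(G^{n+1})\r Rep_0(LG)$ assembling $p$ are compatible with the face maps that insert diagonals, so that the section lands in the realization; this is a diagram chase at the level of the simplicial spaces, using that the face maps of $CH_{\C_2}(C(G),C(G))$ are induced by the diagonal $G\r G\times G$ and the two evaluations $IG\rightrightarrows G$, which under \rref{elli} correspond exactly to coalescing two adjacent marked points on the circle. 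Once continuity-in-families is established, the geometric realization of a simplicial homotopy equivalence is a homotopy equivalence, and we are done.
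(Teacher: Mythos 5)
Your proposal and the paper take genuinely different routes. The paper does not attempt to build a homotopy inverse to $p$ at all: it instead shows that $p$ is a \emph{quasifibration}, using the Dold--Thom criterion (\cite{dt}, restated as Theorem 2.6 of \cite{mq}), with a filtration on the target $Rep_0(LG)$ indexed not by the number of evaluation points but by the dimension of the largest subrepresentation factoring through $e_0\colon LG\to G$; the required open neighborhoods are given by allowing that part of the representation to factor instead through $\operatorname{Map}(U,G)$ for $U$ a small neighborhood of $1$ in $S^1$, and the deformations contract $U$ to $1$. Once $p$ is a quasifibration, the lemma reduces to checking that the point-inverses are (weakly) contractible, which is local and does not require any global choice of cut point. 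You, by contrast, propose to pass through Lemma \ref{lrep2} and then construct an explicit section $s$ of the resulting map together with a simplicial homotopy $s\circ p\simeq \mathrm{id}$ via a ``moving the cut point'' argument.

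The step you flag as the main obstacle is, in fact, a genuine gap that your plan does not resolve. To build a section you must choose, continuously in the representation, a cut point on $S^1$ disjoint from the evaluation points and a presentation of the representation as factoring through a specific finite evaluation map; but the set of evaluation points is not a continuous function of the representation (it can jump when points collide or when a factor becomes trivial), and there is no continuous global choice of cut point on $S^1$ relative to a varying finite subset. Your filtration by ``number of evaluation points'' does not obviously have the required topological regularity either: the locus of representations factoring through at most $i$ points is neither open nor closed in any evident way, so it is unclear that the strata glue to produce a continuous section or a continuous simplicial homotopy, and ``the geometric realization of a simplicial homotopy equivalence is a homotopy equivalence'' is not directly applicable since $Rep_0(LG)$ is not being presented as a simplicial space. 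This is precisely the difficulty the quasifibration argument is designed to sidestep: Dold--Thom lets one work filtration stratum by stratum with local deformations, and the fiberwise contractibility (which is the honest mathematical content of your ``cut point can be moved in an interval'' observation) then finishes the proof without ever producing a global section. So the correct intuition is present in your proposal, but the plan as written would not compile into a proof; you would need to replace the section-plus-homotopy scheme by a quasifibration (or at least a local-to-global) argument.
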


\begin{proof}
First, we show that $p$ is a quasi-fibration. We use the criterion in \cite{dt}, which
is restated as Theorem 2.6 of \cite{mq}. We first observe that $Rep_0(G)$
is a disjoint sum of connected components indexed by dimension $d$ of the
representation. We may consider one connected component at
a time. For a given $d$, let the $i$'th (increasing) filtered part be
spanned by all representations which are (up to isomorphism) of the form $V\otimes W$ where 
$W$ has dimension $\geq n-i$ and factors through the projection
$$e_0: LG\r G.$$
The open neighborhoods required in Theorem 2.6 of \cite{mq} are
then spanned by representations of the form $V\otimes W$ where
$W$ is of dimension $\geq n-i$, and factors through the projection
of $LG$ to $Map(U,G)$ where $U$ is an $\epsilon$-neighborhood
of $1$ in $S^1$. The homotopies $H_t$ and $h_t$ are then
defined by contracting $U$ to $1$.

Once we know that $p$ is a quasifibration, the statement follows, as it
is easily checked that the inverse image of every point is contractible.
\end{proof}

\vspace{3mm}
Putting together Lemmas \ref{lrep1}, \ref{lrep2}, \ref{lrep3}, \ref{lrep4}, we
now obtain the following

\begin{theorem}
\label{trrep}
The group completion of the $E_\infty$ space $Rep_0(LG)$ is weakly equivalent 
to the infinite loop space $K_G(G)_0$.
\end{theorem}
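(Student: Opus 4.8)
The plan is to assemble Theorem \ref{trrep} directly from the chain of equivalences already established in Lemmas \ref{lrep1} through \ref{lrep4}, being careful about where group completion enters. First I would recall that $Rep_0(LG)$ is, by definition, the bar construction on $C_0(LG)$, and that all four of the relevant spaces are $E_\infty$ spaces, so that ``group completion'' means applying $\Omega B$ to each and the comparison maps between them are $E_\infty$ maps. The composite to keep in view is
$$
Rep(G)\xrightarrow{\iota} Rep_0(IG),\qquad
CH_{Rep(\{e\})}(Rep_0(IG),Rep(G))\xrightarrow{p} Rep_0(LG),
$$
together with the map of Lemma \ref{lrep2} and the identification of Lemma \ref{lrep3}. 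So the skeleton of the argument is: $Rep_0(LG)$ is equivalent (via $p$, Lemma \ref{lrep4}) to $CH_{Rep(\{e\})}(Rep_0(IG),Rep(G))$, which is equivalent (via Lemma \ref{lrep2}) to $CH_{Rep(\{e\})}(Rep(G),Rep(G))$, whose group completion $\Omega B$ is, by Lemma \ref{lrep3}, the infinite loop space $K_G(G)_0$.

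The one genuine subtlety — and the step I expect to be the main obstacle — is that Lemma \ref{lrep3} already has $\Omega B$ built into its statement, whereas Lemmas \ref{lrep1}, \ref{lrep2}, \ref{lrep4} are equivalences of $E_\infty$ spaces \emph{before} group completion. The point to nail down is that $\Omega B(-)$, applied to a map of $E_\infty$ spaces that is a weak homotopy equivalence, is again a weak homotopy equivalence (this is immediate, since $B$ preserves weak equivalences of $E_\infty$ spaces and $\Omega$ preserves weak equivalences between spaces with the homotopy type of CW complexes — and all the spaces here have that homotopy type, being realizations of bar constructions on topological categories built from compactly generated spaces). So applying $\Omega B$ to the equivalences of Lemmas \ref{lrep1}/\ref{lrep2} and \ref{lrep4} and then invoking Lemma \ref{lrep3} gives the result. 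I would also remark that the honest group completion statement — that $\Omega B Rep_0(LG)$ computes $K_G(G)_0$ — is the correct formulation, since $Rep_0(LG)$ itself is merely a monoid-like $E_\infty$ space, not grouplike; this is exactly the sense meant in the theorem statement.

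Concretely I would write: ``By Lemma \ref{lrep4}, the map $p$ of \rref{erephh7} is a weak equivalence of $E_\infty$ spaces, hence so is $\Omega Bp$. By Lemma \ref{lrep2}, inclusion of constant loops gives a weak equivalence of $E_\infty$ spaces $CH_{Rep(\{e\})}(Rep_0(IG),Rep(G))\to CH_{Rep(\{e\})}(Rep(G),Rep(G))$, and again $\Omega B$ of this map is a weak equivalence. Composing, the group completion of $Rep_0(LG)$ is weakly equivalent to $\Omega B CH_{Rep(\{e\})}(Rep(G),Rep(G))$, which by Lemma \ref{lrep3} is weakly equivalent to $K_G(G)_0$.'' The only loose end worth a sentence is checking that the comparison maps are genuinely maps of $E_\infty$ spaces (not merely of spaces): $\iota$ and $\kappa$ in Lemma \ref{lrep1} are induced by functors of symmetric monoidal categories, $p$ comes from the group homomorphism \rref{elli} which is compatible with the monoidal structures given by direct sum of representations, and the map of Lemma \ref{lrep2} is likewise induced by a symmetric monoidal functor; so $B$ and then $\Omega B$ may be applied functorially throughout. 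Thus no new input beyond the four lemmas and the formal behavior of $\Omega B$ is needed, and the theorem follows.
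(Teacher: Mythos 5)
Your proposal is correct and follows essentially the same approach as the paper: the paper's proof is the one-liner ``Putting together Lemmas \ref{lrep1}, \ref{lrep2}, \ref{lrep3}, \ref{lrep4}'', and you have simply spelled out the chain of equivalences and the (routine) point that $\Omega B$ preserves weak equivalences of $E_\infty$ spaces. Nothing is missing and nothing differs in strategy.
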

\qed

\vspace{3mm}

\section{Representation-theoretical interpretation of $K_{G\rtimes\Z/2}(G)$}

\label{srep2}
There is also an equivariant version of these constructions with respect
to a $\Z/2$-action where the generator of $\Z/2$ acts on $G$ either by 
$\alpha$ or by $\gamma$. In these cases, we consider the topological
group $LG\rtimes\Z/2$.
$\Z/2$ acts on the loop $f:S^1\r G$ by $f\mapsto g$ where
 in the case of action by $\alpha$ on the target, $g(t)=\alpha(f(t))$,
and in the case of action by $\gamma$ on the target, 
$g(t)=\alpha(f(1-t))$ (again, we use the identification $S^1=I/0\sim 1$).
In both cases, {\em finite representations} are defined as finite-dimensional
representations which factor through a projection to $G^n\rtimes \Z/2$
by evaluation at finitely many points (where, in the case of the $\gamma$-action
on the target, with each evaluation point $t$, we must also include $1-t$).
Now restricting, again, to finite representations, we obtain from $C(G\rtimes\Z/2)$
the categories
$C_0(LG\rtimes \Z/2)$ and their classifying spaces $Rep_0(LH\rtimes\Z/2)$.

Next, we may also similarly define 
\beg{eeqcc}{C_0((IG\times G^n)\rtimes \Z/2),} 
and
its classifying space
\beg{eeqhh}{Rep_0(IG\times G^n)\rtimes\Z/2).}
Here the generator $a$ of $\Z/2$ acts on $f:I\r G$
by $f\mapsto g$ where $g(t)=\alpha(f(t))$,
in the case of $\alpha$-action on the target, $a$ acts on each of the $n$ copies
of $G$ separately by $\alpha$, in the case of $\gamma$ action on the target, 
$a$ acts on $G^n$ by
$(g_1,\dots,g_n)\mapsto (\alpha(g_n),\dots,\alpha(g_1))$.
In the case where $a$ acts by $\alpha$ on the target $G$, \rref{eeqhh}
forms a simplicial category, and its classifying space \rref{eeqhh} forms
a simplicial space. In the case of $a$ acting on the target $G$ by $\gamma$,
the $\Z/2$-action is an automorphism over the involution of the simplicial
category reversing the order of each set $\{0,\dots,n\}$, so we can
still form a ``simplicial realization'' by letting $a$ act on each standard
simplex $\Delta_n$ by $[t_0,\dots,t_n]\mapsto[t_n,\dots,t_0]$ in
barycentric coordinates. In both cases, we denote the simplicial realizations
$$CH_{\C_2}(C_0I(G),C(G))_{\Z/2}$$
and
$$CH_{Rep(G)}(Rep_0(IG),Rep(G))_{\Z/2}.$$
Again, the precise application of the method of proof of Lemma \ref{lrep1} gives

\begin{lemma}
\label{leeh1}
Restriction to constant loops induces a homotopy equivalence
$$CH_{Rep(G)}(Rep_0(IG),Rep(G))_{\Z/2}\r CH_{Rep(G)}(Rep(G),Rep(G))_{\Z/2}$$
(where the target is defined the same way as the source, restricting to constant
maps $I\r G$).
\end{lemma}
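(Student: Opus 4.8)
The plan is to mimic the proof of Lemma \ref{lrep1} (and hence \ref{lrep2}), carrying along the extra $\Z/2$-structure throughout. First I would observe that both sides of the claimed equivalence are obtained from the corresponding non-equivariant constructions of Lemma \ref{lrep2} by equipping them with the stated $\Z/2$-actions, so it suffices to produce a $\Z/2$-equivariant deformation retraction of $CH_{Rep(G)}(Rep_0(IG),Rep(G))$ onto $CH_{Rep(G)}(Rep(G),Rep(G))$ (constant paths) which simplicially realizes levelwise. The map in one direction is restriction to constant loops, i.e.\ precomposition with the inclusion of constant paths $G\hookrightarrow IG$; the map back, on each simplicial level, is ``evaluation at the constant path'' which in the $\alpha$-case is $e_0$ as in Lemma \ref{lrep1} and in the $\gamma$-case must be taken to be evaluation at the fixed point $t=1/2$ of the interval involution $t\mapsto 1-t$ (so that it is compatible with the $\Z/2$-action on $IG$). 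One checks $\kappa\iota=\mathrm{Id}$ exactly as before.

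Next I would write down the homotopy. In the $\alpha$-case, the homotopy $h_s:IG\to IG$, $h_s(f)(x)=f(sx)$ from Lemma \ref{lrep1} is already strictly $\Z/2$-equivariant, since $a$ acts on $IG$ by postcomposition with $\alpha$ and $h_s$ only reparametrizes the source interval; so the induced homotopy on $Rep_0(IG)$ between $\iota\kappa$ and $\mathrm{Id}$ is $\Z/2$-equivariant, and it is visibly compatible with the simplicial/level structure of $CH$ (degeneracies and all faces but the $0$th face act as identities, and the $0$th face is multiplication by $g=f(0)$, which $h_s$ fixes). In the $\gamma$-case one instead uses the homotopy $h_s(f)(x)=f\bigl(\tfrac12+s(x-\tfrac12)\bigr)$, which contracts the interval toward its midpoint $\tfrac12$; since $a$ acts on $IG$ by $f\mapsto\alpha\circ f\circ(1-(-))$ and the reparametrization $x\mapsto \tfrac12+s(x-\tfrac12)$ commutes with $x\mapsto 1-x$, this $h_s$ is again $\Z/2$-equivariant, and it fixes $e_{1/2}(f)$, hence is compatible with the $0$th face (now given by multiplication at $t=1/2$) and with the $\Z/2$-twisted simplicial realization (where $a$ acts on $\Delta_n$ by reversing barycentric coordinates).

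Then I would assemble these into the statement: passing to finite representations and to bar constructions (classifying spaces of the topologized symmetric monoidal categories $C_0(\ast)$) is functorial in $\Z/2$-equivariant maps, and realization of (possibly $\Z/2$-twisted) simplicial spaces preserves levelwise equivariant homotopy equivalences; so the levelwise equivariant deformation retraction realizes to an equivariant homotopy equivalence $CH_{Rep(G)}(Rep_0(IG),Rep(G))_{\Z/2}\to CH_{Rep(G)}(Rep(G),Rep(G))_{\Z/2}$, which is the desired conclusion.

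The main obstacle, and the only place where genuine care is needed beyond copying Lemma \ref{lrep1}, is the $\gamma$-case bookkeeping: one must check that the choice of basepoint $t=1/2$ and the toward-the-midpoint contraction $h_s$ interact correctly with (i) the $\Z/2$-action on $IG$ that simultaneously applies $\alpha$ and reverses the loop parameter, (ii) the face maps $ev_0,ev_1$ of the Hochschild-type simplicial object (which, after the $\gamma$-twist, must be matched up with the order-reversal involution of $\{0,\dots,n\}$), and (iii) the rule that finite representations must include both $t$ and $1-t$ among their evaluation points, so that $h_s$ carries finite representations to finite representations. All of these are routine once the conventions are pinned down, but they are exactly the points where a sign or a reversal could go wrong, so I would verify each explicitly; everything else is immediate from the non-equivariant lemmas.
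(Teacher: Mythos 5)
Your proposal follows the same approach the paper intends (the paper's proof is a one‑line appeal to the method of Lemma~\ref{lrep1}), and your central observation is both correct and genuinely worth spelling out: since in the $\gamma$-case the $\Z/2$-action on $IG$ necessarily reverses the interval parameter (one can check this is forced by the requirement that $a$ intertwine $ev_0$ and $ev_1$ and hence be an anti-simplicial automorphism of the Hochschild object, even though the paper's displayed formula $g(t)=\alpha(f(t))$ is written only for the $\alpha$-case), the constant-path section $e_0$ is not $\Z/2$-equivariant, and one must replace it by $e_{1/2}$ and recenter the contraction at the midpoint. Your equivariance check of $h_s(f)(x)=f(\tfrac12+s(x-\tfrac12))$ is correct, as is the observation that $h_s$ preserves the finiteness and the $t\leftrightarrow 1-t$ pairing condition on evaluation points.

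One point of caution about the middle paragraph: $h_s$ is \emph{not} compatible with the full simplicial structure, in either the $\alpha$- or the $\gamma$-case. The faces of $CH_\bullet(C_0(IG),C(G))$ involve \emph{both} $ev_0$ and $ev_1$; $h_s(f)(x)=f(sx)$ fixes $ev_0$ but moves $ev_1(h_s(f))=f(s)$, and the midpoint contraction moves both endpoints. So neither $\iota$ (extension along $e_0$ or $e_{1/2}$) nor $h_s$ is a (twisted) simplicial map, and the appeal to ``the $0$th face is multiplication by $g=f(0)$, which $h_s$ fixes'' does not give a simplicial homotopy. This does not sink the argument: the map one realizes is $\kappa$ (restriction to constant loops), which \emph{is} a (twisted) simplicial $\Z/2$-equivariant map; $\iota$ and $h_s$ need only witness that each $\kappa_n$ is a level-wise $\Z/2$-homotopy equivalence, and then one concludes via the fact that geometric realization of (possibly $\Z/2$-twisted, suitably proper) simplicial spaces carries levelwise $\Z/2$-equivalences to $\Z/2$-equivalences — which is indeed the point you invoke in your final paragraph. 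Rephrase the middle paragraph to drop the claim that $h_s$ is compatible with the faces, and the argument is clean.
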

\qed

\vspace{3mm}
Next, the map \rref{elli} defines a map
\beg{elli1}{p_{\Z/2}:CH_{Rep(G)}(Rep_0(IG), Rep(G))_{\Z/2}\r Rep_0(LG\rtimes\Z/2)
}
in both the cases of $\alpha$ and $\gamma$ action on the target. Again, we have an
equivariant analogue of Lemma \ref{lrep4}:

\begin{lemma}
\label{leeh2}
The map $p_{\Z/2}$ is an equivalence.
\end{lemma}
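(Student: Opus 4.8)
The plan is to repeat the argument of Lemma \ref{lrep4} verbatim, carrying the $\Z/2$-action along at every step, and checking that all the constructions used there are $\Z/2$-equivariant in a way that is compatible with the relevant quasi-fibration criterion. First I would recall that in the non-equivariant case the proof had two ingredients: $(i)$ the map $p$ is a quasi-fibration, established via the Dold--Thom criterion as restated in Theorem 2.6 of \cite{mq}, using the increasing filtration of $Rep_0(LG)$ by the dimension of the sub-representation factoring through $e_0:LG\r G$, together with the local ``contract a neighborhood $U$ of $1\in S^1$'' homotopies $H_t,h_t$; and $(ii)$ the fibres of $p$ are contractible. For the equivariant statement it suffices to produce, on both the source and target of $p_{\Z/2}$, $\Z/2$-fixed-point versions of these data, and then apply the criterion of \cite{mq} to the $\Z/2$-fixed subspaces (and, trivially, to the underlying spaces, which is Lemma \ref{lrep4} itself); an equivariant map of $\Z/2$-spaces which is a quasi-fibration both underlying and on fixed points, with contractible fibres on both, is an equivalence of $\Z/2$-spaces.

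The key steps, in order, are as follows. First I would observe that the map \rref{elli}, $f\mapsto(f\circ\pi,f(0),\dots,f(0))$, is equivariant for the given $\Z/2$-actions. In the $\alpha$ case this is immediate, since $\alpha$ commutes with all the structure maps and acts pointwise. In the $\gamma$ case one must check compatibility between the action $g(t)=\alpha(f(1-t))$ on $LG$ and the action $(g_1,\dots,g_n)\mapsto(\alpha(g_n),\dots,\alpha(g_1))$ on $G^n$ together with the order-reversal $[t_0,\dots,t_n]\mapsto[t_n,\dots,t_0]$ on simplices: evaluating the reversed loop at $0$ gives $\alpha(f(1))=\alpha(f(0))$, and the internal evaluation points $x$ and $1-x$ are interchanged in the prescribed way, so \rref{elli1} is well-defined and equivariant. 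Second, I would set up the filtration of $Rep_0(LG\rtimes\Z/2)$: exactly as before, on each connected component (indexed by the dimension $d$) the $i$-th filtered piece is spanned by representations $V\otimes W$ with $\dim W\ge d-i$ and $W$ factoring through $e_0$ (in the $\gamma$ case, through the combined evaluation at $0$, which is the $\Z/2$-fixed point of $S^1$ under $t\mapsto 1-t$). This filtration is $\Z/2$-stable because the $\Z/2$-action permutes the evaluation data but preserves ``factors through the fixed point $0$''. Third, the open neighborhoods and the contracting homotopies $H_t,h_t$ of \cite{mq}, defined by contracting an $\epsilon$-neighborhood $U$ of $1\in S^1$, must be chosen $\Z/2$-equivariantly: in the $\alpha$ case $U$ is automatically fixed since $\Z/2$ acts trivially on $S^1$; in the $\gamma$ case I would choose $U$ symmetric under $t\mapsto 1-t$ about the point $0$ (equivalently, an interval about $0\in S^1$), so that the contraction of $U$ to $0$ commutes with the involution, and hence $H_t,h_t$ descend to the fixed-point spaces. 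Finally, the fibre of $p_{\Z/2}$ over a representation factoring through $e_0$ is the same contractible space as in Lemma \ref{lrep4}, now with a compatible $\Z/2$-action, and its fixed-point space is again contractible by the same explicit contraction (which is equivariant by construction). Combining these, $p_{\Z/2}$ is an equivariant quasi-fibration with contractible fibres on both the underlying space and the fixed points, hence an equivalence of $\Z/2$-spaces.

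The main obstacle I expect is purely bookkeeping in the $\gamma$ case: one must be careful that the order-reversal on simplices, the direction reversal on $S^1$, and the reversal on the $n$ evaluation points in $G^n$ are threaded consistently through the bar construction so that ``simplicial realization'' in the twisted sense (the paper's construction letting $a$ act on $\Delta_n$ by reversing barycentric coordinates) is genuinely a $\Z/2$-CW object and $p_{\Z/2}$ is cellular. Once the equivariance of \rref{elli1} and the $\Z/2$-invariance of the filtration and of the neighborhood $U$ about the fixed point $0\in S^1$ are pinned down, the quasi-fibration argument and the contractibility of fibres are formally identical to the non-equivariant case, applied simultaneously to underlying spaces and to fixed points.
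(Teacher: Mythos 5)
Your proposal matches the paper's own (very terse) proof in substance: the paper says only that "the filtration and the deformations used in the proof of Lemma \ref{lrep4} have obvious $\Z/2$-equivariant analogues," and your write-up supplies exactly those analogues — $\Z/2$-equivariance of the map \rref{elli}, $\Z/2$-invariance of the filtration by the dimension of the sub-representation factoring through $e_0$, and the choice of a $\Z/2$-symmetric neighborhood $U$ of $0\sim 1\in S^1$ (which is fixed under $t\mapsto 1-t$) so that the Dold--Thom homotopies $H_t,h_t$ respect the involution. You also implicitly correct a likely omission in the paper's text: for \rref{elli1} to be a homomorphism on semidirect products in the $\gamma$ case, the action of $a$ on a path $\omega\in IG$ must be $\omega\mapsto(t\mapsto\alpha(\omega(1-t)))$, not simply pointwise $\alpha$; your computation of $p_{\Z/2}$-equivariance uses this.

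One caveat on framing. The paper's source and target for $p_{\Z/2}$ are not $\Z/2$-spaces in the naive sense; they are classifying spaces of categories of representations of semidirect products (with, in the $\gamma$ case, a twisted simplicial realization). So "underlying space vs.\ $\Z/2$-fixed points" is not literally the dichotomy in play; rather, one applies the Dold--Thom criterion directly to $p_{\Z/2}$, and the $\Z/2$-compatibility you verify is what is needed for the filtration, neighborhoods, and homotopies to make sense on these categories of $\rtimes\Z/2$-representations. The concrete checks you make are the correct ones either way, so the verification goes through; only the surrounding rhetoric ("an equivariant map which is a quasi-fibration on underlying and on fixed points\ldots") should be replaced by "the Dold--Thom filtration, neighborhoods and deformations all extend to the $\rtimes\Z/2$-representation categories, so the same quasi-fibration and contractible-fibre argument applies verbatim to $p_{\Z/2}$."
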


\begin{proof}
In both cases, the filtration and the deformations used in the proof of Lemma \ref{lrep4}
have obvious $\Z/2$-equivariant analogous. Therefore, the same argument applies.
\end{proof}

\vspace{3mm}

We shall now construct an  $E_\infty$ map 
\beg{eehh10}{CH_{Rep(G)}(Rep(G),Rep(G))_{\Z/2}\r K_{G\rtimes\Z/2}(G)}
where by the right hand side, we mean the space of $G\rtimes\Z/2$-equivariant maps
$G\r (K_{G\rtimes\Z/2})_0$ where the generator $a$ of $\Z/2$ acts on $G$ by
either $\alpha$ or $\gamma$. To this end, again, it suffices to construct 
a symmetric monoidal map of the realization of the category
$$CH_{\C_2}(C(G),C(G))_{\Z/2}$$
to the category of $G\rtimes\Z/2$-vector bundles on $G$.
In the case of $a$ acting on the target $G$ by $\alpha$, the construction is directly
analogous to Lemma \ref{lrep3}. 

In the case of $a$ acting by $\gamma$, we must deal with the involutive automorphism
of the simplicial category. It is most convenient to give this in the form of a
$G\rtimes\Z/2$-equivariant bundle on 
$$G\times CH_{Rep(G)}(Rep(G),Rep(G))_{\Z/2}.$$
We let the bundle be defined by the same formula as in the non-equivariant
case, with $\Z/2$-action
$$\alpha(g,v,[t_0,\dots,t_n])=(\alpha(g)^{-1}, \alpha(g^nv),[t_n,\dots,t_0])$$
(see the proof of Lemma \ref{lrep3}).

\begin{theorem}
\label{teehh}
The induced map
\beg{ehhp*}{\Omega B(CH_{Rep(G)}(Rep(G),Rep(G))_{\Z/2})\r K_{G\rtimes \Z/2}(G)_0
}
is an equivalence.
\end{theorem}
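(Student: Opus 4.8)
The plan is to reduce the equivariant statement to the already-established non-equivariant equivalence of Lemma \ref{lrep3} (via Theorem \ref{trrep}) together with the Tate-cohomology comparison technology of Section \ref{smi}, exactly in the spirit of the proof of Theorem \ref{t1}. First I would assemble the $E_\infty$ map \rref{ehhp*} from the bundle construction described just above the theorem: in the $\alpha$-case this is the literal $\Z/2$-equivariant lift of the functor built in Lemma \ref{lrep3}, and in the $\gamma$-case one uses the $G\rtimes\Z/2$-bundle on $G\times CH_{Rep(G)}(Rep(G),Rep(G))_{\Z/2}$ defined by the displayed formula with the $[t_0,\dots,t_n]\mapsto[t_n,\dots,t_0]$ twist. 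Applying the Elmendorf--Mandell machine and localizing at the Bott element (noting, as in Lemma \ref{lrep3}, that $S^{2}$ is identified $\Z/2$-equivariantly with $S^{2A}$) produces \rref{ehhp*} as a map of genuine $\Z/2$-equivariant infinite loop spaces, or equivalently of $\Z/2$-spectra after applying $\Omega B$.

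Next I would check that this map is a non-equivariant equivalence, i.e. an equivalence on underlying spectra. Forgetting the $\Z/2$-action, the source becomes $\Omega B\,CH_{Rep(G)}(Rep(G),Rep(G))$ and the target becomes $K_G(G)_0$, and the map is precisely the one of Lemma \ref{lrep3} (the $\gamma$-twist of the simplices disappears upon forgetting $\Z/2$, since reversing barycentric coordinates is simplicially homotopic to the identity on $|\Delta^\bullet|$ non-equivariantly). Hence by Lemma \ref{lrep3} the underlying map is an equivalence. Combined with the $E_\infty$ (hence $\Z/2$-infinite-loop) structure, \rref{ehhp*} is therefore a $\Z/2$-equivariant map of $\Z/2$-spectra that is an underlying equivalence, so it induces an equivalence on Borel homology and on Tate cohomology.

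The heart of the argument, and the main obstacle, is then upgrading ``underlying equivalence'' to ``$\Z/2$-fixed-point equivalence''; this is the same issue confronted in the proof of Theorem \ref{t1}. By the isotropy separation square it suffices to show that the map is also an equivalence on geometric $\Z/2$-fixed points $\Phi^{\Z/2}$. Here one must identify both geometric fixed-point spectra: on the target side, $\Phi^{\Z/2}K_{G\rtimes\Z/2}(G)$ was already computed in the course of proving Theorem \ref{t1} (via the $G\rtimes\Z/2$-CW decomposition of $\Lambda$ into cells of type $H\rtimes\Z/2$ and the formula $\Phi^{\Z/2}(K_{G\rtimes\Z/2}^{H})_*=R_H$). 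On the source side one must compute $\Phi^{\Z/2}$ of the bar construction; the key point is that geometric fixed points commute with the relevant (realization and group-completion) colimits, so $\Phi^{\Z/2}\,\Omega B\,CH_{Rep(G)}(Rep(G),Rep(G))_{\Z/2}$ is computed simplicial-level-wise, and on each level one gets the $\Z/2$-fixed subcategory of $\Z/2$-equivariant $G\rtimes\Z/2$-bundles, whose $K$-theory matches $R_H$ through the same identification. Matching the two descriptions (in the $\gamma$-case using that $\gamma$ acts trivially on $T$ and that the simplex-reversal contributes only the sign twist $A$, exactly as the factor $S^{(orb)}$ appears in Theorem \ref{t1}) shows $\Phi^{\Z/2}$ of \rref{ehhp*} is an equivalence. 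Together with the Borel-homology equivalence this yields the $\Z/2$-fixed-point equivalence \rref{ehhp*}, completing the proof. I expect the bookkeeping of the $\gamma$-twist through the geometric fixed points — verifying that the simplicial-coordinate reversal really does produce precisely the representation sign $A$ and nothing more — to be the delicate step.
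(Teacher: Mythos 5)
Your outline (build the map, check the underlying equivalence via Lemma \ref{lrep3}, upgrade by isotropy separation) is correct in spirit, but the paper's proof proceeds differently and your version has a real gap at the crucial step. The paper never computes $\Phi^{\Z/2}$ of the source directly. Instead it introduces a degree filtration on $R(G)\cong\Z[v_1,\dots,v_r]$, splitting $K_G\simeq\bigvee_d K_G^d$ and $CH_K(K_G,K_G)\simeq\bigvee_d CH_K^d(K_G,K_G)$ $\Z/2$-equivariantly into \emph{finite} $K_{\Z/2}$-modules. On each degree-$d$ piece the Brylinski--Zhang map \rref{ehhpi} from a finite wedge of cells is a non-equivariant equivalence, hence a $\Z/2$-equivalence by the finiteness criterion of Section \ref{smi} (now over $K_{\Z/2}$ alone, with no $G$-equivariance). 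Then the triangle \rref{ehhpii}, whose diagonal arrow is a $\Z/2$-equivalence by Theorem \ref{t1} and whose horizontal arrow is a $\Z/2$-equivalence by the degree-wise finiteness argument, forces the vertical map \rref{ehhp**} to be a $\Z/2$-equivalence.

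Your proposal instead asks to compute $\Phi^{\Z/2}\,\Omega B\, CH_{Rep(G)}(Rep(G),Rep(G))_{\Z/2}$ directly, and this is where it breaks down. The claim that $\Phi^{\Z/2}$ ``commutes with the relevant (realization and group-completion) colimits'' is the crux and is not justified: the passage from a $\Z/2$-$E_\infty$-space to a genuine $\Z/2$-spectrum by an equivariant infinite loop space machine is not a naive colimit, and computing $\Phi^{\Z/2}$ of the output as the non-equivariant $K$-theory of a fixed-point category is a nontrivial theorem you would have to establish --- especially in the $\gamma$-case where $\Z/2$ also reverses the simplicial coordinate. More fundamentally, the Tate-to-$\Phi^{\Z/2}$ transfer of equivalences that you invoke is exactly what requires finiteness control over $K_{\Z/2}$; the spectrum $CH_K(K_G,K_G)_{\Z/2}$ is an infinite $K_{\Z/2}$-module and the Greenlees--May comparison does not apply to it as written. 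The degree filtration is not bookkeeping --- it is the device that makes the argument possible. You also do not exploit the already-proved Theorem \ref{t1}, which is how the paper identifies the target side without any new geometric fixed-point computation. Without these ingredients the proposal is a plausible sketch but not a proof.
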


\begin{proof}
It is useful to note that $Rep(G\rtimes\Z/2)$ is a $\Z/2$-equivariant symmetric monoidal category
in the sense that there is a transfer functor
\beg{ehhp1}{\bigoplus_{\Z/2}:Rep(G\rtimes\Z/2)\r Rep(G\rtimes\Z/2)}
extending the commutativity, associativity and unitality axioms in
the usual sense. The functor \rref{ehhp1} is given by 
$$V\mapsto V\oplus V\otimes A$$
where $A$ is the sign representation of the quotient $\Z/2$. 
We may recover $Rep(G)$ as the image of the functor \rref{ehhp1}, and
then the functor \rref{ehhp1} provides a symmetric monoidal
model of the restriction $Rep(G\rtimes \Z/2)\r Rep(G)$.
Applying this construction level-wise on the simplicial level, 
$$ CH_{Rep(G)}(Rep(G),Rep(G))_{\Z/2}\r K{G\rtimes \Z/2}(G)_0$$
becomes a map of $\Z/2$-equivariant $E_\infty$-spaces, and applying
$\Z/2$-equivariant infinite loop space theory, and localizing at the Bott
element, we obtain a map of $Z\2$-equivariant spectra (indexed over the complete universe)
\beg{ehhp**}{CH_K(K_G,K_G)_{\Z/2}\r K_{G\rtimes\Z/2}(G).}
In fact, it is a map of $K_{\Z/2}$-modules. 
Forgetting the $\Z/2$-equivariant structure, we recover the map \rref{erephh5},
which is, of course, an equivalence. To show that the map \rref{ehhp**}, which is
a non-equivariant equivalence, is, in effect, a $\Z/2$-equivalence, we use a
variant of the finiteness argument of the section \ref{smi}, this time using finiteness
directly over $K_{\Z/2}$. 

Recall again that 
$$R(G)\cong \Z[v_1,\dots,v_r]$$
where $v_i$ are the fundamental irreducible representations of $G$. Let us call 
a finite-dimensional complex representation of $G$ {\em of degree $n$}
if it is a sum of subrepresentations, each of which is isomorphic to a tensor product
of $n$ elements of $\{v_1,\dots,v_r\}$ (an element is allowed to occur more
than once). We see, in fact, that we obtain degree $d$ versions of all the
symmetric monoidal categories considered, giving a stable splitting of the form
$$K_G\simeq \bigvee_{d\geq 0} K_{G}^{d},$$
$$CH_K(K_G,K_G)\simeq \bigvee_{d\geq 0} CH_{K}^{d}(K_G,K_G).$$
Also, these splittings are $\Z/2$-equivariant (since $\Z/2$ acts on $\{v_1,\dots,v_r\}$
by permutation), and in fact the map \rref{ehhp**} decomposes into maps
$$CH_K(K_G,K_G)_{\Z/2}\r K_{G\rtimes\Z/2}(G).$$
On the other hand, the Brylinski-Zhang construction restricts to a map
of finite $K_{\Z/2}$-modules
\beg{ehhpi}{\begin{array}{l}
K^{d}_{G\rtimes\Z/2}\wedge(
\displaystyle\bigvee_{\begin{array}[t]{c}1\leq i_1<\dots<i_k\leq n\\
\sigma\{i_1,\dots,i_k\}\succ\{i_1,\dots,i_k\}
\end{array}} \Sigma^k\Z/2_+
\\[8ex]
\vee
\displaystyle\bigvee_{\begin{array}[t]{c}1\leq i_1<\dots<i_k\leq n\\
\sigma\{i_1,\dots,i_k\}=\{i_1,\dots,i_k\}
\end{array}} \Sigma^k S^{(orb\{i_1,\dots,i_k\})})\r CH_{K}^{d}(K_G,K_G).
\end{array}
}
Since each of $K_{\Z/2}$-modules are finite, an equivariant map which is a non-equivariant
equivalence is an equivalence by the argument of Section \ref{smi} (in fact, made simpler
by the fact that we do not consider $G$-equivariance here). Denoting the left hand side
of \rref{ehhpi} by $\mathcal{K}^d$, we now have a diagram of $K_{\Z/2}$-modules
of the form
\beg{ehhpii}{\diagram
\bigvee_{d\geq 0} \mathcal{K}^d\rto^(.4)\sim\drto_\sim & CH_{K}(K_G,K_G)_{\Z/2}\dto\\
& K_{G\rtimes \Z/2}(G).
\enddiagram
}
The horizontal arrow is a $\Z/2$-equivalence by the fact that \rref{ehhpi} is
a $\Z/2$-equivalence and by the stable splitting, the diagonal arrow is a
$\Z/2$-equivalence by Theorem \ref{t1}. Therefore, the vertical arrow is 
a $\Z/2$-equivalence.
\end{proof}

\vspace{10mm}

\vspace{5mm}
\noindent
\address{Po Hu\\Department of Mathematics\\Wayne State University\\1150 Faculty/Administration Bldg.\\
656 W. Kirby\\ Detroit, MI 48202, U.S.A.}

\noindent
\email{po@math.wayne.edu}

\vspace{5mm}
\noindent
\address{Igor Kriz\\Department of Mathematics\\University of Michigan\\530 Church Street\\Ann Arbor, MI 28109-1043,
U.S.A.}

\noindent
\email{ikriz@umich.edu}

\vspace{5mm}
\noindent
\address{Petr Somberg\\
Mathematical Institute, MFF UK\\
Sokolovsk\'{a} 83\\
180 00 Praha 8, Czech Republic}

\noindent
\email{somberg@karlin.mff.cuni.cz}

\noindent{{\bf 2010 MSC classification:} 19L47, 55N15, 55P91, 53C35}

\noindent{{\bf Keywords:} equivariant K-theory, stable homotopy theory,
compact Lie groups, compact symmetric spaces, representations
of loop groups}

\end{document}